\documentclass{amsart}
\usepackage{amsmath,amssymb,amsthm}
\usepackage{amsaddr}
\usepackage[backend=biber,style=alphabetic]{biblatex}
\newtheorem{theorem}{Theorem}[section]
\newtheorem{lemma}[theorem]{Lemma}
\theoremstyle{definition}
\newtheorem{definition}[theorem]{Definition}
\newtheorem{prop}[theorem]{Proposition}
\theoremstyle{remark}
\newtheorem{remark}[theorem]{Remark}

\begin{document}

\title{A note on isomorphism types of complete discretely valued fields of mixed characteristic}
\author{Pranav Velavan}
\email{pranavelavan@outlook.com}

\begin{abstract}
In this paper, we resolve the problem of finding the required level of truncation needed on the isomorphism class of a truncated valuation ring of a complete discretely valued field of mixed characteristic to determine the isomorphism class of the field itself.
\end{abstract}

\maketitle
\pagestyle{plain}

\section{Introduction}

In the study of complete discretely valued fields, it is a natural question to investigate to what extent a field is determined by a sufficient truncation of its valuation ring. Significant work on this problem has been completed in the case where the fields of study have perfect residue fields (see for instance \cite{LEE2021102927}) where the required algebraic machinery is simpler. In the narrower case of finite extensions of $\mathbb{Q}_p$, similar questions have been explored: see \cite{Hou-Keating}. The general case which includes that of imperfect residue fields too is more subtle and less work has been done. The general technique to study these fields is to realise them as totally ramified extensions over some unramified subfield. Then we can identify features of the field by studying properties of the resulting Eisenstein polynomial. In recent work by Anscombe-Dittmann-Jahnke \cite{Dittmann}, it was established that the level of truncation needed to determine a complete discretely valued field depends only on the residue field characteristic $p$ and initial ramification $e$. Further, they achieved a bound on this quantity $n(e,p)$ by proving a uniform bound on the discriminant on all Eisenstein polynomials that could occur. In this paper, we determine $n(e,p)$ exactly in all cases.\\

We are primarily interested in the following quantity.
\begin{definition}
     $n(e,p)$ is the smallest positive integer $n$ satisfying:\[
     (K,v)\cong (L,w)\iff\mathcal{O}_v/(p)^n\cong\mathcal{O}_w/(p)^n
     \] for all pairs of complete discretely valued fields $(K,v)$ and $(L,w)$ of mixed characteristic $(0,p)$ and initial ramification $e$.

\end{definition}
Note:
\begin{lemma}
    $n(e,p)$ exists for all $e,p$. Furthermore\[
    n(e,p) \leq e(1+v_p(e))
    \]
\end{lemma}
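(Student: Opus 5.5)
Since $\mathcal{O}_v/(p)^n$ is a quotient of $\mathcal{O}_v/(p)^{n'}$ for $n\le n'$, the property in the definition is monotone in $n$: it suffices to exhibit one $n$ that works and bound it. I will take $n=e(1+v_p(e))$ and follow the approach of Anscombe--Dittmann--Jahnke \cite{Dittmann}. The plan is to write $K$ as a totally ramified extension of an unramified subfield, show that the truncation recovers the Eisenstein polynomial modulo a suitable power of $p$, and then use Krasner's lemma through a uniform bound on the discriminant.

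\textbf{Setup.} By Cohen structure theory, a complete DVF $(K,v)$ of mixed characteristic with residue field $k$ contains a Cohen subring $C(k)$: a complete unramified DVR with residue field $k$, in which $p$ is a uniformizer. Its fraction field $K_0$ satisfies $[K:K_0]=e$, and $K=K_0(\pi)$ for any uniformizer $\pi$. The minimal polynomial of $\pi$ is an Eisenstein polynomial
\[
f(X)=X^e+a_{e-1}X^{e-1}+\dots+a_0, \qquad a_i\in pC(k),\quad a_0\in pC(k)^\times .
\]
Now suppose $\varphi:\mathcal{O}_v/(p)^n\cong\mathcal{O}_w/(p)^n$. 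Then the residue fields agree, so $k\cong l$. Moreover, $C(k)/p^{m}$ lifts through the truncation. Concretely, $C(k)/p^{\lceil n/e\rceil}$ embeds in $\mathcal{O}_v/(p)^n$, and the Cohen ring is formally smooth over $\mathbb{Z}_p$ (relative to $p$-bases). Hence the composite embedding into $\mathcal{O}_w/(p)^n$ lifts to an embedding $C(k)\to\mathcal{O}_w$, and we may identify the two Cohen rings compatibly with $\varphi$ modulo $p^{\lceil n/e\rceil}$. Write $\bar\pi$ for the class of $\pi$, and choose $\pi'\in\mathcal{O}_w$ lifting $\varphi(\bar\pi)$; it is again a uniformizer. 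Every element of $\mathcal{O}_v$ is $\sum_{i<e} c_i\pi^i$ with $c_i\in C(k)$. Therefore $f(\pi)=0$ transfers to
\[
g(\pi')\equiv 0 \pmod{\pi'^{\,n}},
\]
where $g$ is $f$ with coefficients transported to $C(l)$. The key technical point here is that the $C(k)$-coordinates of elements are well defined modulo the right power of $p$ in the truncation.

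\textbf{Krasner step.} Let $h$ be the Eisenstein minimal polynomial of $\pi'$ over $L_0$. From $g(\pi')\equiv 0 \pmod{\pi'^{\,n}}$, we want a root $\alpha$ of $g$ closer to $\pi'$ than any conjugate of $\pi'$ is to $\pi'$. Then Krasner gives $L_0(\pi')\subseteq L_0(\alpha)$, and a degree count gives equality, so $L\cong K$ as valued fields over the identified Cohen subfield. Equivalently, use continuity: if two Eisenstein polynomials of degree $e$ agree modulo $p^{m}$ with $m$ large relative to $v(\mathrm{disc})$, they define the same extension. The needed precision is governed by $v_\pi(f'(\pi))$, i.e.\ the different. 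For an Eisenstein polynomial, $v_\pi(f'(\pi))=\min_i\bigl(e\,v_p(a_i)+i-1\bigr)$ over the terms $ia_i\pi^{i-1}$, including $i=e$. The $i=e$ term contributes $e\,v_p(e)+e-1$. So the different exponent is at most $e\,v_p(e)+e-1<e(1+v_p(e))$, uniformly in $K$. This is the uniform discriminant bound, and it turns $g(\pi')\equiv 0 \pmod{\pi'^{\,n}}$ with $n=e(1+v_p(e))$ into Hensel/Krasner applicability: $v(g(\pi'))\ge n>2v(g'(\pi'))-\dots$. I need to calibrate carefully here. The cleanest route is Newton's lemma in the form $v(g(\pi'))>v(g'(\pi'))+\max_{\sigma}v(\pi'-\sigma\pi')$. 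In the Eisenstein setting this reduces exactly to requiring $n>v_\pi(f'(\pi))$, which the bound above provides.

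\textbf{Main obstacle.} The delicate step is the lifting and identification of Cohen rings when $k$ is imperfect. The Cohen ring is not canonical, and $\varphi$ need not come from a ring map preserving chosen Cohen subrings. I would handle this by picking a $p$-basis of $k$ with lifts, using it to define the embedding $C(k)\to\mathcal{O}_w$ extending $\varphi$ on the truncation, and absorbing the ambiguity into the coefficients of $g$. This is harmless since they are only needed modulo the precision fixed by the discriminant bound. The converse direction of the ``iff'' is trivial.
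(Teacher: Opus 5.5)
\textbf{The gap is in your Krasner step, and it is more than a calibration issue.}

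The criterion you quote is $v(g(\pi'))>v(g'(\pi'))+\max_\sigma v(\pi'-\sigma\pi')$. It does \emph{not} reduce to $n>v_\pi(f'(\pi))$, because you have dropped the second term. That term is the paper's $\kappa$, normalised so that $v_\pi(\pi)=1$. It is never zero: all roots of an Eisenstein polynomial have $v_\pi=1$, so $\kappa\ge 1$. The correct condition, exactly as in the paper's Lifting Proposition, is $v_\pi(g(\pi'))>d+\kappa$ with $d=v_\pi(f'(\pi))$.

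Your bookkeeping is also $\mathfrak m$-adic: you use $C(k)/p^{\lceil n/e\rceil}$ and only $g(\pi')\equiv 0\pmod{\pi'^{\,n}}$. Combined with the dropped term, your argument would prove that $\mathcal O/\mathfrak m^{e(1+v_p(e))}$ determines the field, which is false. Take $p\nmid e$, $e>1$, and $f=X^e-p$ over the unramified $K_0$ of the paper's tame counterexample. Let $\pi'=\beta$ with $\beta^e=up$, where $\bar u\neq 1$ is not an $e$-th power. Then $v_\beta(f(\beta))=v_\beta((u-1)p)=e>e-1=v_\pi(f'(\pi))$, so your criterion is met. Yet $K_0(\beta)\not\cong K_0(p^{1/e})$, and $\kappa=1$ is precisely the missing margin. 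Likewise the wild pairs used for optimality agree modulo $(p)^{k+1}=\mathfrak m^{e(k+1)}$.

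Note also that $C(k)/p^{\lceil n/e\rceil}$ does not embed in $\mathcal O_v/(p)^n$, which has characteristic $p^n$. Since $(p)^n=\mathfrak m^{en}$, it is $C(k)/p^n$ that embeds.

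\textbf{How to repair it.} The lemma itself survives because it has a great deal of slack; the paper simply cites Anscombe--Dittmann--Jahnke, Corollary 5.5. But you must use the full precision that an isomorphism modulo $(p)^n$ provides.

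Your formal-smoothness lift, run with $C(k)/p^n$, gives a Cohen subring of $\mathcal O_w$ compatible with $\varphi$. This is a reasonable substitute for the paper's compatibility lemma. It also gives $v_\pi(g(\pi'))\ge en$.

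The same Eisenstein computation gives $v_\pi(g'(\pi'))=d\le e-1+e\,v_p(e)$. Moreover $\kappa\le d$ trivially, since $d$ is a sum of $e-1$ terms each $\ge 1$. So Hensel (or Krasner with $d+\kappa\le 2d$) applies as soon as $en>2d$. For $n=e(1+v_p(e))$ and $e\ge 2$ this holds, since $en\ge 2e(1+v_p(e))>2e(1+v_p(e))-2\ge 2d$.

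The resulting root of $g$ in $L$ gives $L\cong K$ by a degree count. The case $e=1$ is just the Cohen-ring lift.
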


\begin{proof}
    See \cite{Dittmann} Corollary 5.5
\end{proof}
    
We prove the following theorem:

\begin{theorem}
    Suppose $e>1$ and let $k= v_p(e)$. Then:
    \[
     n(e,p)= k+2
    \] for $p\neq 2$ or $k=0$ and 
    \[
    n(e,2)=k+3
    \] otherwise.
    
\end{theorem}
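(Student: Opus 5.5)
We prove the two inequalities $n(e,p)\le k+2$ (resp.\ $k+3$) and $n(e,p)\ge k+2$ (resp.\ $k+3$) separately, working throughout with the standard presentation. By Cohen structure theory, a complete discretely valued field $(K,v)$ of mixed characteristic $(0,p)$ with initial ramification $e$ is a totally ramified extension of degree $e$ of an absolutely unramified complete field $K_0$ whose valuation ring is a Cohen ring $C(\kappa)$ for the residue field $\kappa$. Thus $K=K_0(\pi)$ with $\pi$ a root of an Eisenstein polynomial $f(X)=X^e+a_{e-1}X^{e-1}+\dots+a_0$ over $C(\kappa)$. Since $\mathcal{O}_v/(p)^n=\mathcal{O}_v/(\pi^{en})$, the ring $\mathcal{O}_v/(p)^n$ recovers $\kappa$ together with $C(\kappa)/p^n$. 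It also recovers $f$ modulo $p^n$, up to the action of changing the uniformizer and changing the embedding of the Cohen ring. So we must compare Eisenstein polynomials that agree modulo a given power of $p$, and decide when their root fields are isomorphic.

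\textbf{Upper bound.} Let $f,g$ be Eisenstein with $f\equiv g \pmod{p^n}$, after the identifications above. We find a root of $g$ in $K=K_0(\pi)$ by Krasner's lemma, or more precisely by a Newton/Hensel argument on $g(\pi+x)$. We have $v(g(\pi))\ge en$, and we need this to exceed
\[
v(g'(\pi))+\text{(distance to the other roots)},
\]
which is roughly $2v(f'(\pi))$ for Hensel, or $\max_{i\ne j}v(\pi_i-\pi_j)$ for Krasner. Here $v(f'(\pi))=v(\mathfrak{D})\le e-1+e\,v_p(e)$, normalized so that $v(\pi)=1$. For the sharp bound we must exploit the freedom in choosing the uniformizer and the Cohen lift, as in \cite{Dittmann}. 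This is the key step. We first normalize $f$ so that the coefficients $a_i$ with $p\nmid i$ carry the main contributions to the different. When $p\nmid e$, the term $e\pi^{e-1}$ dominates, $v(f'(\pi))=e-1$, and $n=2$ follows directly from Hensel: we need $2n e>$ something below $2e$, and the extra $p^{1}$ is needed only to fix $\kappa$ together with the unit part.

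When $k\ge1$, the different can still be large, so the direct Krasner bound gives only $e(1+k)$. The plan is to show that the isomorphism class is determined by the \emph{$p$-adic expansion} of the unit $a_0/p$ and of the $a_i$ only to depth $k+2$. The argument runs by induction on the depth. Suppose two polynomials agree modulo $p^{m}$ with $m\ge k+2$. Then a substitution $\pi\mapsto \pi(1+u)$ with $u\in p^{m-k-1}\mathcal{O}$, combined with an automorphism of the Cohen ring of the form $\mathrm{id}+O(p^{m-1})$, kills the discrepancy modulo $p^{m+1}$. This works because $(1+u)^e-1\equiv eu \pmod{\text{higher}}$, and $e$ has valuation exactly $k$ in $p$. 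Convergence then gives isomorphism. For $p=2$ the binomial term $\binom{e}{2}u^2$ has the same size as $eu$ at the first step, which costs one extra level and gives $k+3$. Controlling the interaction of this step with the imperfect residue field, where the Cohen ring automorphisms are not unique, is the main obstacle. There I would use the fact that any lift of the identity on $\kappa$ modulo $p^m$ extends to an automorphism of $C(\kappa)$ (\cite{Dittmann}) to absorb the constant terms.

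\textbf{Lower bound.} For sharpness we construct, for each $e$ and $p$, two fields whose truncations agree at level $k+1$ (resp.\ $k+2$ when $p=2$ and $k\ge1$) but which are non-isomorphic. If $k=0$, take $\kappa$ imperfect with $t\notin\kappa^p$, and compare $X^e-p$ with $X^e-p(1+p\tilde t)$. These agree modulo $p^1$ but not beyond, and they differ in whether the ratio $(1+p\tilde t)$ is an $e$-th power times a norm-type unit. For $k\ge1$, compare $X^e-p$ with $X^e-p(1+p^{k+1}c)$, where $c$ is chosen so that $1+p^{k+1}c$ is not an $e$-th power in $K_0$. For $p=2$, compare with $X^e-p(1+2^{k+2}c)$, using that $1+4c$ fails to be a square when $\bar c$ is not of the form $x^2+x$. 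We then verify non-isomorphism by showing that any isomorphism would force $\pi'=\pi\cdot w$ with $w^e$ equal to the offending unit up to the allowed Cohen automorphisms, which is impossible by a residue-field computation of $p$-th powers.
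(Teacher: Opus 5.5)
\textbf{The lower-bound constructions fail.} In all three cases your perturbation sits one $p$-adic level too deep, and the two fields you compare are isomorphic.

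\emph{The case $k=0$.} The constant terms $p$ and $p+p^2\tilde t$ agree modulo $p^2$, not just modulo $p$. Since $p\nmid e$, Hensel's lemma applied to $X^e-(1+p\tilde t)$ at $X=1$ shows that $1+p\tilde t=c^e$ for some $c\in K_0$. Then $c\pi$ is a root of $X^e-p(1+p\tilde t)$ inside $K_0(\pi)$, so the imperfection of the residue field plays no role. What you need is a unit $u$ whose \emph{residue} is not an $e$-th power, e.g.\ residue field $\mathbb{F}_{p^f}$ with $e\mid p^f-1$, and then compare $X^e-p$ with $X^e-up$.

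\emph{The case $k\ge1$, $p$ odd.} The polynomial $X^e-p(1+p^{k+1}c)$ agrees with $X^e-p$ modulo $p^{k+2}$. Moreover $1+p^{k+1}\mathcal O=(1+p\mathcal O)^{p^k}$, and units $\equiv 1 \pmod p$ are $n$-th powers. So every such unit is an $e$-th power in $K_0$, and the $c$ you ask for does not exist. It cannot exist, since otherwise $n(e,p)>k+2$.

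\emph{The case $k\ge1$, $p=2$.} Here $1+2^{k+2}\mathcal O=(1+4\mathcal O)^{2^k}$, so your Artin--Schreier criterion for $1+4c$ is beside the point.

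\emph{What the correct examples look like.} The right perturbation is $1+p^k\cdot(\text{unit})$, as in the paper's $(1+p)^{e/p}$ over $\mathbb{Q}_p$ and $5^{e/2}=1+2^{k+1}\cdot(\text{unit})$ over $\mathbb{Q}_2$. Even then, non-isomorphism cannot be settled by an $e$-th-power test in $K_0$ or by a residue-field computation, since the unit has residue $1$. You must exclude an $x\in L'$ with $x^e=p$, and the ratio $x/\beta$ lives in $L'$, not in $K_0$. The paper proceeds as follows:
\begin{itemize}
\item it rules out $p$-power roots of unity in $L'$;
\item it deduces that $\mathbb{Q}_p((1+p)^{1/p})\subseteq L'$ (resp.\ that $\sqrt{5}$, $\sqrt{-5}$ or $i$ lies in $L'$);
\item it reaches a contradiction from total ramification or from transitivity of the different against the bound $d(L'/F)\le [L':F]-1+v_{L'}([L':F])$.
\end{itemize}

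\textbf{The upper-bound argument is incomplete.} Your inductive step is only checked for $f=X^e-pa$, via the binomial expansion of $(1+u)^e$. In the wild case a general Eisenstein polynomial cannot be normalized to that shape. For general $f$, the correction $\pi\mapsto\pi+\delta$ is a Newton step whose higher terms $f^{(j)}(\pi)\delta^j/j!$ you never bound. Plain Hensel would need $n>2v_p(f'(\pi))$, which can be close to $2k+2$. You also leave the imperfect-residue-field issue unresolved.

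\emph{The missing ingredient.} Any uniformiser satisfies
\[
\max_{\pi'\neq\pi}v_p(\pi-\pi')\le\tfrac1{p-1}+\tfrac1e .
\]
This is Hou--Keating for perfect residue fields. It transfers to imperfect residue fields by passing to an extension of $K_0$ with perfect residue field and ramification index $1$, over which the Eisenstein polynomial stays irreducible and all root distances are unchanged.

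\emph{How it finishes the proof.} The different bound gives $S\le k+1-\frac1e$. For a lift $x$ of the image of $\pi$, the estimate $n\le v_p(f(x))\le\max_i v_p(x-\pi_i)+S$ forces $\max_i v_p(x-\pi_i)$ to exceed the displayed bound as soon as $n>k+1+\frac1{p-1}$. Krasner's lemma then finishes, and this threshold is exactly $k+2$ for odd $p$ and $k+3$ for $p=2$. So Krasner is sharp once you use this refined accounting instead of a crude estimate. No Cohen-ring automorphisms are needed beyond the compatible identification of Cohen subrings.

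Your Hensel argument for the tame upper bound is fine.
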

\begin{remark}
    In the case $e=1$, it is known already that an unramified complete discretely valued field of mixed characteristic is determined up to isomorphism by its residue field; i.e in the language of this paper $n(1,p) =1$. See \cite{serre1979local} Chapter II §5 Theorem 3 for the perfect residue field case and \cite{CML_2022__14_2_1_0} Corollary 6.6 for the general case.
\end{remark}
\begin{remark}
    This is an improvement on Theorem 4.4 in \cite{LEE2021102927} for perfect residue fields. Note that \cite{LEE2021102927} studies powers of the maximal ideal $\mathfrak{m}$ whilst we consider powers of the ideal $(p)$ so the lifting number $M(R)$ from \cite{LEE2021102927} differs by a factor of the ramification index $e$.
\end{remark}
\section{Preliminaries for finitely ramified complete discretely valued fields}
The general reference for the results in this section is \cite{serre1979local}.
We fix a prime number $p$. All valued fields ($K,v$) will be of mixed characteristic $(0,p)$. The value group and residue field of $K$ will be denoted $vK$, $Kv$ respectively. The algebraic closure of a field $K$ will be denoted $K^{al}$. Hereafter $v_p$ will denote the renormalisation of $v$ satisfying $v_p(p)=1$.

\begin{definition}[Initial Ramification]
The \textit{initial ramification} of a complete discretely valued field ($K,v$) of mixed characteristic $(0,p)$ is defined to be $e:=v(p).$
    
\end{definition}

We note the following important lemma:

\begin{lemma}\label{cohenlem}
    Let $(K,v)$ be a complete discretely valued field of initial ramification $e$. Then there exists an unramified subfield $K_0$ such that $Kv = K_0v$ and $K/K_0$ is a totally ramified field extension of degree $e$.
\end{lemma}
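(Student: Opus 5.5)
The plan is to construct $K_0$ as the fraction field of a coefficient ring of $\mathcal{O}_v$, using Cohen's structure theorem. Then I would show that $\mathcal{O}_v$ is a free module of rank $e$ over this coefficient ring, which gives degree $e$ and total ramification.

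Write $k = Kv$ and let $\pi$ be a uniformiser of $\mathcal{O}_v$.

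\emph{Step 1: the coefficient ring.} Since $\mathcal{O}_v$ is a complete Noetherian local ring of mixed characteristic $(0,p)$, Cohen's structure theorem supplies a Cohen ring $C(k)$ and a local homomorphism $\varphi\colon C(k)\to\mathcal{O}_v$ inducing the identity on residue fields. Here $C(k)$ is a complete discrete valuation ring with maximal ideal $pC(k)$ and residue field $k$. When $k$ is perfect, $C(k)=W(k)$ and $\varphi$ can be written down explicitly through Teichm\"uller representatives, as in Serre, Chapter II. For imperfect $k$ one uses the general Cohen construction, built from a $p$-basis of $k$.

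\emph{Step 2: $\varphi$ is injective and its image is closed.} Every nonzero element of $C(k)$ has the form $p^n u$ with $u$ a unit, and
\[
\varphi(p^n u)=p^n\varphi(u), \qquad v\bigl(p^n\varphi(u)\bigr)=ne<\infty .
\]
So $\ker\varphi=0$. Moreover $v\circ\varphi = e\cdot v_{C(k)}$, so $\varphi$ is a scaled isometry. Its image $A$ is therefore complete, hence closed in $\mathcal{O}_v$.

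Set $K_0=\operatorname{Frac}(A)=A[1/p]$. Then $(K_0, v|_{K_0})$ is complete and discretely valued, with value group generated by $v(p)=e$ and residue field $k$. In particular $p$ is a uniformiser of $K_0$, so $K_0$ is unramified and $K_0v=Kv$.

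\emph{Step 3: degree and total ramification.} Consider the ring
\[
\mathcal{O}_v/p\mathcal{O}_v=\mathcal{O}_v/\pi^e\mathcal{O}_v ,
\]
which is an $A/pA=k$-module. It has the filtration by $\pi^i\mathcal{O}_v/\pi^e\mathcal{O}_v$ for $0\le i\le e$. Each graded piece $\pi^i\mathcal{O}_v/\pi^{i+1}\mathcal{O}_v$ is one-dimensional over $k$, and the $k$-structure on these pieces agrees with the one coming from $A$, because $\varphi$ induces the identity on residue fields. Hence $1,\pi,\dots,\pi^{e-1}$ reduce to a $k$-basis of $\mathcal{O}_v/p\mathcal{O}_v$.

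Since $A$ is complete and $\mathcal{O}_v$ is $p$-adically separated and complete, the complete version of Nakayama's lemma applies (successive approximation modulo $p^n$). It shows that $1,\pi,\dots,\pi^{e-1}$ generate $\mathcal{O}_v$ as an $A$-module. They are also $A$-linearly independent, because their valuations are pairwise distinct modulo $e=v(p)$, and $v(A\setminus\{0\})\subseteq e\mathbb{Z}$. Therefore $\mathcal{O}_v$ is free of rank $e$ over $A$, and $[K:K_0]=e$. Since the ramification index of $K/K_0$ is $v(K^\times):v(K_0^\times)=e$, the extension is totally ramified with trivial residue extension.

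The main obstacle is Step 1 in the imperfect residue field case. There the existence of the lifting $\varphi$ cannot come from Teichm\"uller representatives and needs Cohen's theorem: one lifts a $p$-basis of $k$ and uses formal smoothness of $C(k)$ over $\mathbb{Z}_p$. I would cite this rather than reprove it. Steps 2 and 3 are routine valuation bookkeeping.
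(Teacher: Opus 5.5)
Your proposal is correct and follows essentially the same route as the paper: take a coefficient (Cohen) ring of $\mathcal O_v$ via Cohen's structure theorem, set $K_0$ to be its fraction field, and show that $1,\pi,\dots,\pi^{e-1}$ is a basis of $\mathcal O_v$ over it by reducing modulo $p$, using successive approximation, and using the distinct-valuations-mod-$e$ argument. The only cosmetic difference is that you phrase Cohen's theorem as giving a map $\varphi$ from an abstract Cohen ring and then check its injectivity and closed image, whereas the paper takes the coefficient ring directly as a subring of $\mathcal O_v$.
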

\begin{proof}
By \cite{cohen1946structure}, Theorem 11, the valuation ring $\mathcal O_v$ contains a Cohen subring $C_K$ whose residue field is naturally identified with $Kv$. Put $K_0:=\operatorname{Frac}(C_K)$. Since $C_K$ is a strict Cohen ring, it is a complete discrete valuation ring with uniformiser $p$. Let $\pi$ be a uniformiser of $K$. Since $v(p)=e$, we have $p\mathcal O_v=\mathfrak m_v^e=\pi^e\mathcal O_v$. As $C_K/pC_K\cong Kv$, it follows that $1,\pi,\ldots,\pi^{e-1}$ form a basis of $\mathcal O_v/p\mathcal O_v$ over $C_K/pC_K$. We claim that they in fact form a $C_K$-basis of $\mathcal O_v$. Let $x\in\mathcal O_v$. Choose $c_{i,0}\in C_K$, for $0\leq i<e$, such that $x-\sum_{i=0}^{e-1}c_{i,0}\pi^i\in p\mathcal O_v$. Thus $x=\sum_{i=0}^{e-1}c_{i,0}\pi^i+px_1$ for some $x_1\in\mathcal O_v$. Repeating this construction, for every $N\geq 1$ we obtain elements $c_{i,r}\in C_K$ such that
$x=\sum_{i=0}^{e-1}\left(\sum_{r=0}^{N-1}p^r c_{i,r}\right)\pi^i+p^N x_N$. Since $C_K$ is $p$-adically complete, the sums $a_i:=\sum_{r=0}^{\infty}p^r c_{i,r}$ converge in $C_K$, and hence $x=\sum_{i=0}^{e-1}a_i\pi^i$. Thus $1,\pi,\ldots,\pi^{e-1}$ generate $\mathcal O_v$ over $C_K$. They are also linearly independent. Indeed, suppose that $\sum_{i=0}^{e-1}a_i\pi^i=0$ with $a_i\in C_K$. For each nonzero $a_i$, write $a_i=p^{r_i}u_i$ with $u_i\in C_K^\times$. Since a unit of $C_K$ is also a unit of $\mathcal O_v$, we have $v(a_i\pi^i)=er_i+i$. These values are pairwise distinct for distinct $i$, since they are distinct modulo $e$. Hence a nonzero sum cannot vanish, by the ultrametric inequality. Therefore all $a_i$ are zero. Consequently $\mathcal O_v$ is a free $C_K$-module of rank $e$, and taking fraction fields gives $[K:K_0]=e$. Moreover, $K$ and $K_0$ have the same residue field, while $(vK:vK_0)=e$. Hence $K/K_0$ is totally ramified of degree $e$.
\end{proof}
Next we note the following well-known fact about totally ramified extensions:
\begin{lemma}\label{lem}
    Let $(K,v)$ be a complete discretely valued field and let $(L,w)/(K,v)$ be a totally ramified extension of degree $e$; i.e $[L:K] = (wL:vK) = e$. Then any uniformiser of $L$ generates $L$ over $K$ and its minimal polynomial over $K$ is an Eisenstein polynomial of degree $e$.
\end{lemma}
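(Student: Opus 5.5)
We prove Lemma~\ref{lem}, the statement immediately above. The plan has two parts: first show that a uniformiser has degree $e$ over $K$, then show its minimal polynomial is Eisenstein by comparing valuations of its roots in a splitting field. Throughout we use that, since $K$ is complete, $v$ extends uniquely to any finite extension of $K$. Hence all $K$-conjugates of an element of $L$ have the same valuation, and every valuation below is measured in $wL$ and suitable extensions of it.

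Let $\pi$ be a uniformiser of $L$, so $w(\pi)$ generates $wL$. Consider the subfield $K(\pi)\subseteq L$. Its value group contains $w(\pi)$, and therefore contains all of $wL$. It follows that $e=(wL:vK)\le (w K(\pi):vK)\le [K(\pi):K]\le [L:K]=e$, using the fundamental inequality $(w K(\pi):vK)\le[K(\pi):K]$. All inequalities are therefore equalities, so $L=K(\pi)$ and the minimal polynomial $f(X)=X^e+a_{e-1}X^{e-1}+\dots+a_0$ of $\pi$ over $K$ has degree $e$.

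Next pass to a splitting field $M$ of $f$ with the unique extension $w_M$ of $w$. The roots $\pi_1=\pi,\dots,\pi_e$ are conjugate, so $w_M(\pi_i)=w(\pi)$ for every $i$. Normalise $v$ on $K$ so that $vK=\mathbb{Z}$. Total ramification of degree $e$ then means $wL=\frac1e\mathbb{Z}$ and $w(\pi)=1/e$. The coefficients are $a_j=\pm$ the elementary symmetric functions of the $\pi_i$. Each $a_j$ with $j<e$ is a sum of products of $e-j\ge1$ roots, so the ultrametric inequality gives $v(a_j)\ge (e-j)/e>0$. Since $a_j\in K$, this forces $v(a_j)\ge1$, i.e. $a_j\in\mathfrak m_K$. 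Finally $a_0=\pm\prod\pi_i$, so $v(a_0)=e\cdot\frac1e=1$ exactly, which means $a_0\notin\mathfrak m_K^2$. Hence $f$ is Eisenstein of degree $e$.

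The only point needing care is the uniqueness of the extension of the valuation, which is what makes the conjugates share a valuation. This holds because $K$ is complete, which is standard (see \cite{serre1979local} Chapter II). The rest is bookkeeping with the fundamental inequality and the ultrametric inequality.
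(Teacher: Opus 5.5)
Your proof is correct, but it takes a different route from the one behind the paper's proof, which simply defers to \cite{serre1979local}, Chapter~I, \S6, Proposition~18.

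That argument stays inside $L$ and uses only the single valuation $w$. Normalise $vK=\mathbb{Z}$. In the relation $\pi^e+a_{e-1}\pi^{e-1}+\dots+a_0=0$ one has $w(a_i\pi^i)\in\mathbb{Z}+i/e$, so the only two terms whose valuations can coincide are $\pi^e$ and $a_0$. In a vanishing sum the minimum valuation is attained at least twice. This forces $v(a_0)=1$ and $w(a_i\pi^i)>1$ for $0<i<e$, which is the Eisenstein condition. Applying the same observation to a hypothetical relation of degree $<e$ shows $[K(\pi):K]=e$. The paper itself runs this distinct-classes argument in the proof of Lemma~\ref{cohenlem}. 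It also shows that $1,\pi,\dots,\pi^{e-1}$ is an $\mathcal O_K$-basis of $\mathcal O_L$, which gives the stronger conclusion $\mathcal O_L=\mathcal O_K[\pi]$.

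Your version gets $L=K(\pi)$ from the fundamental inequality instead. You then use uniqueness of the extension of $v$ to a splitting field to give every conjugate of $\pi$ valuation $1/e$, and read the Eisenstein condition off the elementary symmetric functions. This makes transparent that $v(a_0)=v(N_{L/K}(\pi))=1$. The cost is passing to a normal closure and appealing to uniqueness of extensions of valuations, neither of which the in-$L$ argument requires.
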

\begin{proof}
    See \cite{serre1979local} Chapter 1, §6, Proposition 18
\end{proof}

We will also need the following lemmas:
\begin{lemma}[Krasner's Lemma]
    Let $(K,v)$ as before. Let $\alpha,\beta\in K^{al}$. If $v(\alpha-\beta)>v(\alpha-\alpha_i)$ for all conjugates $\alpha_i$ of $\alpha$ over $K$, then $K[\alpha]\subset K[\beta]$.
\end{lemma}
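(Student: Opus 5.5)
The plan is the classical argument: use the uniqueness of the extension of $v$ to $K^{al}$, which holds because $(K,v)$ is complete. Then exploit the fact that automorphisms fixing $K[\beta]$ preserve the valuation, and conclude with the ultrametric inequality. Throughout, $v$ also denotes its unique extension to $K^{al}$.

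\textbf{Step 1 (invariance of $v$).} Since $K$ is complete, $v$ extends uniquely to every finite extension of $K$, and hence to $K^{al}$. Consequently, for any $K$-automorphism $\sigma$ of $K^{al}$, the map $v\circ\sigma$ is again an extension of $v$, so $v\circ\sigma=v$. We are in characteristic $0$, so $K^{al}/K$ is Galois, and the conjugates $\alpha_i$ of $\alpha$ over $K$ are exactly the images $\sigma(\alpha)$ for $\sigma\in\operatorname{Aut}(K^{al}/K)$.

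\textbf{Step 2 (contradiction argument).} Put $L=K[\beta]$ and suppose that $\alpha\notin L$. Because $K^{al}/L$ is Galois (again by characteristic $0$), there is some $\sigma\in\operatorname{Aut}(K^{al}/L)$ with $\sigma(\alpha)\neq\alpha$. This $\sigma$ fixes $K$, so $\sigma(\alpha)=\alpha_i$ for some conjugate $\alpha_i\neq\alpha$. Since $\sigma$ also fixes $\beta$, Step 1 gives
\[
v(\alpha_i-\beta)=v(\sigma(\alpha-\beta))=v(\alpha-\beta).
\]

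\textbf{Step 3 (conclusion).} By the ultrametric inequality,
\[
v(\alpha-\alpha_i)\geq\min\{v(\alpha-\beta),\,v(\beta-\alpha_i)\}=v(\alpha-\beta).
\]
This contradicts the hypothesis $v(\alpha-\beta)>v(\alpha-\alpha_i)$. Hence $\alpha\in K[\beta]$, that is, $K[\alpha]\subset K[\beta]$.

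The only step with real content is Step 1, the invariance of the valuation under conjugation, which rests on the uniqueness of extensions of valuations over a complete field. I will cite this from \cite{serre1979local} (Chapter II) rather than reprove it. I will interpret the hypothesis as ranging over the conjugates $\alpha_i\neq\alpha$, since otherwise it is vacuous. Separability, which is needed to produce $\sigma$ with $\sigma(\alpha)\neq\alpha$, is automatic in characteristic $0$.
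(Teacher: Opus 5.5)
Your proof is correct and is essentially the same approach: invariance of $v$ under $K$-automorphisms of $K^{al}$ (from uniqueness of extensions over a complete field), followed by the ultrametric inequality, which is the standard proof in Milne's Proposition 7.60 that the paper simply cites. Your reading of the hypothesis as ranging over conjugates $\alpha_i\neq\alpha$ also matches how the paper uses the lemma in the Lifting Proposition, where $\kappa(\pi)$ is a maximum over $\pi'\neq\pi$.
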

\begin{proof}
    See \cite{milneANT} §7, Prop 7.60.
\end{proof}
\begin{lemma}\label{compatibility} Let $(K,v)$ and $(L,w)$ be fields as above, both with initial ramification $e$, and let $n$ be a positive integer. Suppose $\phi:\mathcal O_v/(p)^n\to\mathcal O_w/(p)^n$ is an isomorphism. Then for any Cohen subring $C_K\subseteq\mathcal O_v$ as in Lemma \ref{cohenlem}, there exists a Cohen subring $C_L\subseteq\mathcal O_w$ and an isomorphism $C_K\to C_L$ whose reduction modulo $(p)^n$ agrees with the restriction of $\phi$. \end{lemma}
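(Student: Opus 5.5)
We need to prove Lemma \ref{compatibility}: given an isomorphism $\phi:\mathcal O_v/(p)^n\to\mathcal O_w/(p)^n$ and a Cohen subring $C_K\subseteq\mathcal O_v$, produce a Cohen subring $C_L\subseteq\mathcal O_w$ and an isomorphism $C_K\to C_L$ reducing to $\phi$ modulo $p^n$.

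The plan is to transport $C_K$ through $\phi$ into $\mathcal O_w/(p)^n$, and then lift the resulting map $C_K\to\mathcal O_w/(p)^n$ to an honest ring map into $\mathcal O_w$. The tool is the formal smoothness, or lifting property, of Cohen rings. The precise input is \cite{cohen1946structure}, or equivalently the standard fact (EGA $0_{\mathrm{IV}}$ 19.8.6, Matsumura Thm.\ 29.2 and its generalisation to imperfect residue fields) that a Cohen ring $C$ is formally smooth over $\mathbb Z_p$ for the $p$-adic topology. Concretely, for any complete local ring $(A,\mathfrak m_A)$ in which $p$ is topologically nilpotent, and any closed ideal $I$, every local homomorphism $C\to A/I$ lifts to a local homomorphism $C\to A$.

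\emph{Construction of the map.} Let $\psi_0:C_K\to\mathcal O_v/(p)^n\xrightarrow{\phi}\mathcal O_w/(p)^n$ be the composite. Apply the lifting property with $A=\mathcal O_w$ and $I=p^n\mathcal O_w$; $\mathcal O_w$ is complete for the $p$-adic topology, which coincides with its $\mathfrak m_w$-adic topology. This gives a ring homomorphism $\psi:C_K\to\mathcal O_w$ with $\psi\bmod p^n=\psi_0$. Alternatively, one can argue directly by induction on $m\ge n$: lift $C_K\to\mathcal O_w/p^m$ to $C_K\to\mathcal O_w/p^{m+1}$ using that square-zero deformations are unobstructed for formally smooth maps, then pass to the limit.

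\emph{$\psi$ is injective and its image is a Cohen ring.} Since $\psi(p)=p\neq0$ and the nonzero ideals of $C_K$ are the $p^rC_K$, the kernel of $\psi$ is a prime ideal not containing $p$. It is therefore $0$, so $\psi$ is injective. Set $C_L:=\psi(C_K)\cong C_K$. Then $C_L$ is a complete DVR with uniformiser $p$; it is closed in $\mathcal O_w$ because valuations are preserved up to the factor $e$. Its residue map is induced by $\phi$ modulo $\mathfrak m$, so $C_L\to Lw$ is the composite of $C_K/p\cong Kv$ with the isomorphism $Kv\cong Lw$ induced by $\phi$. This composite is surjective, hence an isomorphism onto $Lw$. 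Thus $C_L$ is a Cohen subring of $\mathcal O_w$ with residue field identified with $Lw$, as in Lemma \ref{cohenlem}.

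\emph{Compatibility.} The reduction of $\psi$ modulo $(p)^n$ is $\psi_0$, which is exactly $\phi$ restricted to the image of $C_K$ in $\mathcal O_v/(p)^n$. Note that $C_K\cap p^n\mathcal O_v=p^nC_K$ by the basis argument in Lemma \ref{cohenlem}, so this image is $C_K/p^nC_K$ and the statement makes sense.

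\emph{Main obstacle.} The expected difficulty is the lifting step when $Kv$ is imperfect, since there is no Teichmüller section to write down the lift explicitly. If citing formal smoothness is not acceptable, I would use a $p$-basis $\{\bar b_j\}$ of $Kv$ and its lifts $b_j\in C_K$. The procedure is to choose arbitrary lifts $c_j\in\mathcal O_w$ of $\psi_0(b_j)$, then use Cohen's structure theory to define the map on the subring topologically generated by $\mathbb Z_p$ and the $b_j$ together with their $p$-power roots up to the required level. A map from a Cohen ring is determined by the images of lifts of a $p$-basis (Cohen, Theorem 11), which gives a homomorphism. The remaining point, that it agrees with $\psi_0$ modulo $p^n$, follows because two maps into $\mathcal O_w/p^n$ that agree on the $b_j$ and on the residue field coincide, by the same uniqueness statement.
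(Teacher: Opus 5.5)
Your proof is correct, but it takes a different route from the paper's.

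\textbf{The paper's route.} The paper works inside the target. It sets $D=\phi(\overline C_K)$ and pulls back to $R=q_L^{-1}(D)\subseteq\mathcal O_w$. It shows via Eakin--Nagata that $R$ is a complete Noetherian local ring, and takes a coefficient ring $C_L\subseteq R$ from Cohen's structure theorem. It then proves $\overline C_L=D$ using Mac Lane's Identity Theorem. Only after that does it build $\psi\colon C_K\to C_L$ from $p$-basis representatives via \cite[Corollary 6.5]{CML_2022__14_2_1_0}, using the uniqueness half of that corollary to see that $\psi$ reduces to $\phi$ modulo $p^n$.

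\textbf{Your route.} You lift the composite $C_K\to\mathcal O_w/p^n$ directly through the square-zero thickenings $\mathcal O_w/p^{m+1}\to\mathcal O_w/p^m$. The tool is formal smoothness of $C_K$ over $\mathbb Z_p$ for the $p$-adic topology, and you take $C_L$ to be the image. Compatibility modulo $p^n$ then holds by construction. Your remaining checks are sound:
\begin{itemize}
\item the kernel is a prime not containing $p$, so $\psi$ is injective;
\item units go to units, so $C_L$ is closed and $C_L\cap p^n\mathcal O_w=p^nC_L$;
\item $C_L\to Lw$ is the surjection induced by $\bar\phi$.
\end{itemize}

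\textbf{Comparison.} Your argument is shorter and avoids Eakin--Nagata, the Identity Theorem and the separate uniqueness step. The cost is the more abstract input of formal smoothness, namely flatness over $\mathbb Z_p$ plus formal smoothness of $C_K/pC_K$ over the perfect field $\mathbb F_p$. The paper's argument stays within the explicit $p$-basis machinery of Cohen and Mac Lane.

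What you genuinely need is lifting through nilpotent thickenings, as in your inductive formulation. A result that only prescribes the induced residue-field map would not control $\psi$ modulo $p^n$.

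Your fallback paragraph is unnecessary. It also misattributes the existence and uniqueness of maps with prescribed images of $p$-basis lifts to Cohen's Theorem 11, which only gives existence of coefficient rings. That statement is a Mac Lane-type result; the paper uses \cite[Corollary 6.5]{CML_2022__14_2_1_0} for the Cohen-ring case.
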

\begin{proof}
Let $q_K:\mathcal O_v\to \mathcal O_v/p^n\mathcal O_v$ and $q_L:\mathcal O_w\to \mathcal O_w/p^n\mathcal O_w$ be the quotient maps, and let $\bar\phi:Kv\xrightarrow{\sim}Lw$ be the residue-field isomorphism induced by $\phi$.

Choose a Cohen subring $C_K\subseteq\mathcal O_v$. Since units of $C_K$ remain units in $\mathcal O_v$, we have $C_K\cap p^n\mathcal O_v=p^nC_K$. Hence $\overline C_K:=q_K(C_K)\cong C_K/p^nC_K$ is a Cohen ring of characteristic $p^n$ with residue field $Kv$.

Set $D:=\phi(\overline C_K)\subseteq\mathcal O_w/p^n\mathcal O_w$. Then $D$ is a Cohen ring of characteristic $p^n$ whose residue field is $Lw$. We first show that $D$ is the reduction of a Cohen subring of $\mathcal O_w$.

Define $R:=q_L^{-1}(D)\subseteq\mathcal O_w$. The ring $R$ is local, with maximal ideal $\mathfrak m_R=q_L^{-1}(pD)$. Moreover, $\mathfrak m_R=pR+p^n\mathcal O_w$. Indeed, if $x\in\mathfrak m_R$, then $q_L(x)=pd$ for some $d\in D$. Choosing $r\in R$ with $q_L(r)=d$, we obtain $x-pr\in p^n\mathcal O_w$.

We claim that $R$ is a complete Noetherian local ring. Since the initial ramification of $L$ is $e$, we have $p^n\mathcal O_w=\mathfrak m_w^{en}$. The filtration of $\mathcal O_w/p^n\mathcal O_w$ by the powers of $\mathfrak m_w$ has $en$ successive quotients, each isomorphic to $Lw=D/pD$. Hence $\mathcal O_w/p^n\mathcal O_w$ has finite length as a $D$-module.

It follows that $\mathcal O_w$ is finitely generated as an
$R$-module. Indeed, choose a finite set of $D$-module generators of $\mathcal O_w/p^n\mathcal O_w$ containing $1$, and lift them to $\mathcal O_w$. Every element of $\mathcal O_w$ differs from
an $R$-linear combination of these lifts by an element of $p^n\mathcal O_w\subseteq R$, and hence these lifts generate $\mathcal O_w$ as an $R$-module.

Since $\mathcal O_w$ is Noetherian and is finite as an $R$-module, the Eakin--Nagata theorem implies that $R$ is Noetherian. Furthermore, $R$ is closed in $\mathcal O_w$, and $\mathfrak m_w^{en}\subseteq\mathfrak m_R\subseteq\mathfrak m_w$, so the $\mathfrak m_R$-adic and $\mathfrak m_w$-adic topologies on $R$ are equivalent. Hence $R$ is complete. Thus $R$ is a complete Noetherian local ring.

By Cohen's structure theorem (\cite{cohen1946structure}, Theorem 11), $R$ contains a coefficient ring $C_L$. Since $R\subseteq\mathcal O_w$ has characteristic zero, $C_L$ is a strict Cohen ring. Let $\overline C_L:=q_L(C_L)$. Again, $C_L\cap p^n\mathcal O_w=p^nC_L$, so $\overline C_L\cong C_L/p^nC_L$ is a Cohen ring of characteristic $p^n$. Since $C_L\subseteq R$, we have $\overline C_L\subseteq D$.

In fact $\overline C_L=D$. To see this, choose a $p$-basis $B$ of $Lw$ and choose representatives of $B$ in $\overline C_L$. These are also representatives of $B$ in $D$. Applying Mac Lane's Identity Theorem \cite[Theorem 5.1]{CML_2022__14_2_1_0}, with ambient Cohen ring $D$, gives $D\subseteq\overline C_L$. Hence $D=\overline C_L$.

We may therefore regard the restriction of $\phi$ as an isomorphism of Cohen rings $\phi_C:C_K/p^nC_K\xrightarrow{\sim}C_L/p^nC_L$.

Choose a $p$-basis $B_K$ of $Kv$ and representatives $s_K:B_K\to C_K$. For each $b\in B_K$, choose a representative $s_L(\bar\phi(b))\in C_L$ whose image modulo $p^n$ is $\phi_C\bigl(s_K(b)\bmod p^n\bigr)$. By \cite[Corollary 6.5]{CML_2022__14_2_1_0}, there is a unique isomorphism of Cohen rings $\psi:C_K\xrightarrow{\sim}C_L$ which induces $\bar\phi$ on the residue fields and satisfies $\psi(s_K(b))=s_L(\bar\phi(b))$ for every $b\in B_K$.

Reducing modulo $p^n$ gives an isomorphism $\psi_n:C_K/p^nC_K\xrightarrow{\sim}C_L/p^nC_L$. The maps $\psi_n$ and $\phi_C$ induce the same residue-field isomorphism and agree on the prescribed representatives of the $p$-basis. Applying the uniqueness statement of \cite[Corollary 6.5]{CML_2022__14_2_1_0} to these Cohen rings of characteristic $p^n$, we obtain $\psi_n=\phi_C$.

Thus the reduction of $\psi$ modulo $p^n$ agrees with the restriction of $\phi$, as required.

\end{proof}

Using these we can prove:
\begin{prop}[Lifting]
    Let $(K,v)$, $(L,w)$ be a pair of finitely ramified complete discretely valued fields of initial ramification $e$, let $K_0$ be as in Lemma \ref{cohenlem}, and let $\pi$ be a uniformiser of $K$. Define
    $\kappa(\pi)=\max_{\substack{\pi'\sim\pi\\ \pi'\neq\pi}}v_p(\pi-\pi')$,
    where $\pi'$ runs over all conjugates of $\pi$ over $K_0$, and let
    $S=\sum_{\pi'\neq\pi}v_p(\pi-\pi')$. Suppose $n>\kappa(\pi)+S$ and that $\mathcal{O}_v/(p)^n\cong\mathcal{O}_w/(p)^n$. Then $(K,v)\cong(L,w)$.
\end{prop}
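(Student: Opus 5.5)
Let $f(X)=X^e+a_{e-1}X^{e-1}+\dots+a_0\in C_K[X]$ be the minimal polynomial of $\pi$ over $K_0$. By Lemma \ref{lem} it is Eisenstein, and by Lemma \ref{cohenlem} we have $\mathcal O_v=C_K[\pi]$. The plan is to transport $f$ to $L$ using Lemma \ref{compatibility}, find a root of the transported polynomial that is close to a uniformiser of $L$, and then use Krasner's Lemma to identify the two fields.

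Fix an isomorphism $\phi:\mathcal O_v/(p)^n\to\mathcal O_w/(p)^n$. Lemma \ref{compatibility} gives a Cohen subring $C_L\subseteq\mathcal O_w$ and an isomorphism $\psi:C_K\to C_L$ that reduces to $\phi|_{C_K}$ modulo $p^n$. Put $L_0=\operatorname{Frac}(C_L)$. Since $C_K\cong C_L$ and both $K/K_0$ and $L/L_0$ are totally ramified of degree $e$ (Lemma \ref{cohenlem}), it is enough to produce an $L_0$-embedding $K_0[X]/(f^\psi)\hookrightarrow L$ extending $\psi$ and compare degrees. Here $f^\psi\in C_L[X]$ is obtained by applying $\psi$ to the coefficients, and it is again Eisenstein. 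Choose $\rho\in\mathcal O_w$ lifting $\phi(\pi \bmod p^n)$. Because $\phi$ is a ring isomorphism that agrees with $\psi$ on $C_K$, we get $f^\psi(\rho)\equiv\phi(f(\pi))=0 \pmod{p^n}$, so $v_p(f^\psi(\rho))\ge n$.

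Now work in $L^{al}$ and write $f^\psi(X)=\prod_{i}(X-\pi_i)$, where the $\pi_i$ are the roots and correspond, via $\psi$, to the conjugates of $\pi$ over $K_0$. Since $\sum_i v_p(\rho-\pi_i)\ge n$, some root $\pi_1$ satisfies $v_p(\rho-\pi_1)\ge n/e$. I want the stronger bound $v_p(\rho-\pi_1)>\kappa$, so I argue as follows. Let $\pi_1$ be a root maximising $\delta:=v_p(\rho-\pi_1)$, and suppose $\delta\le\kappa$. For every $i$ we have $v_p(\rho-\pi_i)\ge\min(\delta,v_p(\pi_1-\pi_i))$, and $v_p(\rho-\pi_i)\le\delta$ by maximality. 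Also $v_p(\pi_1-\pi_i)$ is at most the corresponding difference exponent, and these exponents sum to $S$ for $i\neq 1$; this uses that the multiset of difference valuations $\{v_p(\pi_1-\pi_i)\}$ is the same for every root, because the Galois group acts transitively. Then
\[
n\le\sum_i v_p(\rho-\pi_i)\le \delta+\sum_{i\ne1}v_p(\rho-\pi_i),
\]
and by the ultrametric inequality each $v_p(\rho-\pi_i)$ with $i\ne1$ is at most $\max$-compatible with $v_p(\pi_1-\pi_i)$ whenever that is less than $\delta$. So $v_p(\rho-\pi_i)\le\delta$ always, and $v_p(\rho-\pi_i)=v_p(\pi_1-\pi_i)$ whenever $v_p(\pi_1-\pi_i)<\delta$.

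The main obstacle is making this bookkeeping give $\sum_{i\neq1}v_p(\rho-\pi_i)\le S$. That would give $n\le\delta+S\le\kappa+S$, contradicting the hypothesis. The terms with $v_p(\pi_1-\pi_i)\ge\delta$ contribute at most $\delta\le v_p(\pi_1-\pi_i)$, and the remaining terms contribute exactly $v_p(\pi_1-\pi_i)$, so the sum is indeed at most $S$. Hence $v_p(\rho-\pi_1)>\kappa\ge v_p(\pi_1-\pi_i)$ for all $i\ne1$.

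Krasner's Lemma, applied over $L_0$ with $\alpha=\pi_1$ and $\beta=\rho$, now gives $L_0(\pi_1)\subseteq L_0(\rho)\subseteq L$. Since $[L_0(\pi_1):L_0]=e=[L:L_0]$, we get $L=L_0(\pi_1)$. The assignment $\pi\mapsto\pi_1$ extends $\psi$ to an isomorphism $K=K_0(\pi)\to L_0(\pi_1)=L$. It is valuation-preserving because the valuations on these complete fields are unique, and this proves $(K,v)\cong(L,w)$.
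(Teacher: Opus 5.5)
Your proof is correct and follows the paper's argument. Both transport the minimal polynomial via the Cohen-ring isomorphism of Lemma \ref{compatibility}, lift the image of $\pi$ to $\rho\in\mathcal O_w$, pick the root $\pi_1$ closest to $\rho$, and bound $\sum_{i\neq 1}v_p(\rho-\pi_i)\le S$ to force $v_p(\rho-\pi_1)>\kappa(\pi)$; Krasner's lemma plus a degree count then finish. Your case split for that bound works, though the paper gets it in one line: $v_p(\rho-\pi_i)\le v_p(\pi_1-\pi_i)$ for every $i\neq 1$, directly from the maximality of $\delta$ and the ultrametric inequality.
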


\begin{remark}
    The analogous lifting results in the case of perfect residue fields can be found in \cite{LEE2021102927} Chapter 3.
\end{remark}

\begin{proof}
    Let $F$ denote the minimal polynomial of $\pi$ over $K_0$. Taking
    $K_0:=\operatorname{Frac}(C_K)$ and
    $L_0:=\operatorname{Frac}(C_L)$, and using Lemma
    \ref{compatibility}, we may identify $K_0$ with $L_0$ in such a
    way that the truncation isomorphism is compatible with this
    identification. Let $\bar\pi$ denote the class of $\pi$ in
    $\mathcal O_K/(p^n)$, and choose $x\in\mathcal O_L$ lifting its
    image in $\mathcal O_L/(p^n)$. Then $v_p(F(x))\geq n$.
    
    Let $\pi=\pi_1,\ldots,\pi_e$ be the conjugates of $\pi$ over $K_0$,
    and choose $j$ such that
    $m:=v_p(x-\pi_j)=\max_i v_p(x-\pi_i)$. For $i\neq j$, the
    ultrametric inequality gives
    $v_p(x-\pi_i)\leq v_p(\pi_i-\pi_j)$. Since $K_0$ is complete, its
    valuation has a unique extension to an algebraic closure, and hence
    $\sum_{i\neq j}v_p(\pi_j-\pi_i)=S$ and $\max_{i\neq j}v_p(\pi_j-\pi_i)=\kappa(\pi)$. Therefore
    $v_p(F(x))=\sum_i v_p(x-\pi_i)\leq m+S$.
    
    Since $v_p(F(x))\geq n>\kappa(\pi)+S$, we have
    $m>\kappa(\pi)$. Krasner's lemma now gives
    $K_0(\pi_j)\subseteq K_0(x)\subseteq L$. Since both
    $K_0(\pi_j)$ and $L$ have degree $e$ over $K_0$, equality holds.
    Thus $L=K_0(\pi_j)\cong K$.
\end{proof}

\section{Tamely Ramified Case}
We first consider the case of tame ramification. We define the following standard terminology:
\begin{definition}\label{wilddefn}
    We say a finitely ramified complete discretely valued field $(K,v)$ of mixed characteristic $(0,p)$ and initial ramification $e$ is \textit{tamely ramified} if $p\nmid e$. Otherwise we say $(K,v)$ is \textit{wildly ramified}.
\end{definition}
In this case, the Eisenstein polynomial from Lemma \ref{lem} takes a simple shape:
\begin{lemma}\label{tame_minpoly}
    Let $(K,v)$ be a tamely ramified complete discretely valued field with initial ramification $e$. Then we can choose a uniformiser $\pi$ (from \ref{lem}) with minimal polynomial over $K_0$ (defined as in Lemma \ref{cohenlem}) of the form: \[
    F = X^e -pa
  \] where $a\in \mathcal{O}_0^\times$ is a unit in the Cohen subring $\mathcal{O}_0$.
\end{lemma}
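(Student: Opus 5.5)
Start from any uniformiser $\pi_1$ of $K$. Write $p = \pi_1^e u$ with $u \in \mathcal{O}_v^\times$; this is possible because $v(p)=e$. The plan is to change $\pi_1$ by a unit so that $\pi^e/p$ becomes an element of $\mathcal{O}_0^\times$. Then $\pi$ is a root of $X^e - pa$, and this polynomial is Eisenstein. Lemma \ref{lem} then shows it is the minimal polynomial of $\pi$ over $K_0$.

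The key steps are as follows. First, by Lemma \ref{cohenlem} the residue fields agree: $Kv = K_0v$. So the residue of $u^{-1}$ lifts to some $a \in \mathcal{O}_0^\times$. Put $w := a^{-1}u^{-1}$. This is a principal unit, $w \in 1+\mathfrak{m}_v$, and it satisfies $\pi_1^e = pa\,w$.

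Second, since $p \nmid e$, we extract an $e$-th root of $w$ in $\mathcal{O}_v$. Apply Hensel's lemma to $g(Y) = Y^e - w$ over the complete ring $\mathcal{O}_v$. The reduction $\bar g = Y^e - 1$ has the simple root $1$, because $\bar g'(1) = e \neq 0$ in $Kv$. Hence there is $y \in 1+\mathfrak{m}_v$ with $y^e = w$. Alternatively, use the binomial series for $w^{1/e}$; it converges because $1/e \in \mathbb{Z}_p$.

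Third, set $\pi := \pi_1 y^{-1}$. Then $\pi$ is again a uniformiser, and $\pi^e = pa$. The polynomial $F = X^e - pa$ is Eisenstein over $\mathcal{O}_0$, since $p$ is the uniformiser of $\mathcal{O}_0$ and $a$ is a unit there. So $F$ is irreducible, and $F(\pi)=0$. Since $[K:K_0]=e$, this makes $F$ the minimal polynomial of $\pi$, which is consistent with Lemma \ref{lem}.

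The only step needing care is the Hensel step. Completeness of $\mathcal{O}_v$ and separability of $Y^e-1$ mod $p$ are exactly what the tame hypothesis supplies, so I expect no real obstacle there. Everything else is bookkeeping with units.
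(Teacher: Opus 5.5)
Your argument is correct and is essentially the proof behind the paper's citation of Lang (Ch.~II, \S5, Prop.~12). That proof writes $\pi_1^e = p\cdot(\text{unit})$, uses $Kv=K_0v$ to split the unit as a unit $a\in\mathcal{O}_0^\times$ times a principal unit, and removes the principal unit by taking an $e$-th root via Hensel's lemma (possible since $p\nmid e$). Your write-up also makes explicit the paper's remark that perfectness of the residue field is never used: only completeness and the equality of residue fields enter.
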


\begin{proof}
    See \cite{lang1995N} Chapter II §5, Proposition 12 (see also \cite{Dittmann} Remark 4.8(2)).
\end{proof}
\begin{remark}
    We note that the hypotheses in Chapter 2 of Lang assume the residue field is perfect. However the proof does not use this hypothesis and hence is valid for our purposes.
\end{remark}
We will need the following formulation of Hensel's Lemma:
\begin{lemma}[Hensel's lemma]\label{Hensel}
Let $(K,v)$ be a complete discretely valued field, and let $f\in\mathcal O_K[X]$. Suppose that there exists $x\in\mathcal O_K$ such that
\[
v(f(x))>2v(f'(x)).
\]
Then there exists $\alpha\in\mathcal O_K$ such that
\[
f(\alpha)=0.
\]
\end{lemma}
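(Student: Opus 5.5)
This is the standard Newton-iteration form of Hensel's lemma, and the plan is to build a Cauchy sequence that converges to a root. The statement only guarantees some $\alpha\in\mathcal O_K$, so there is no uniqueness to prove, though the construction will in fact give $v(\alpha-x)>v(f'(x))$.

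\textbf{Setup.} Put $x_0=x$, $\delta=v(f'(x_0))$ and $\varepsilon_0=v(f(x_0))-2\delta>0$. If $f(x_0)=0$ we stop, and the hypothesis forces $f'(x_0)\neq 0$, so $\delta<\infty$. Define
\[
x_{k+1}=x_k-\frac{f(x_k)}{f'(x_k)} .
\]

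\textbf{Induction.} I will prove by induction on $k$ that:
\begin{itemize}
\item $x_k\in\mathcal O_K$;
\item $v(f'(x_k))=\delta$;
\item $v(f(x_k))\geq 2\delta+2^k\varepsilon_0$.
\end{itemize}
Set $h_k=-f(x_k)/f'(x_k)$. Then
\[
v(h_k)=v(f(x_k))-\delta\geq \delta+2^k\varepsilon_0>\delta\geq 0,
\]
so $x_{k+1}\in\mathcal O_K$.

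\textbf{Taylor expansions.} Since $f\in\mathcal O_K[X]$, there are polynomials $g_k\in\mathcal O_K[X]$ (coming from the divided-derivative coefficients $f^{[i]}$, which are integral) with
\[
f(x_k+h)=f(x_k)+f'(x_k)h+h^2g_k(h).
\]
Taking $h=h_k$, the first two terms cancel, so
\[
v(f(x_{k+1}))\geq 2v(h_k)\geq 2\delta+2^{k+1}\varepsilon_0 .
\]
Similarly, $f'(x_k+h)=f'(x_k)+h\,\tilde g_k(h)$ with $\tilde g_k$ integral. This gives $v\bigl(f'(x_{k+1})-f'(x_k)\bigr)\geq v(h_k)>\delta$, and hence $v(f'(x_{k+1}))=\delta$. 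This completes the induction.

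\textbf{Convergence.} We have $v(x_{k+1}-x_k)=v(h_k)\geq\delta+2^k\varepsilon_0\to\infty$. Because $v$ is discrete, $\varepsilon_0$ is bounded below by a positive value, so the sequence is Cauchy. Completeness gives a limit $\alpha\in\mathcal O_K$, since $\mathcal O_K$ is closed. Polynomials are continuous, so $f(\alpha)=\lim f(x_k)$, and $v(f(x_k))\to\infty$ gives $f(\alpha)=0$.

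\textbf{Main obstacle.} The only delicate point is keeping $v(f'(x_k))$ constant along the iteration, since this is what makes the error exponent double each step. It comes directly from $v(h_k)>\delta$ and the integrality of the Taylor coefficients. Alternatively, one can simply cite the standard reference (e.g.\ \cite{lang1995N} Chapter II §2, Proposition 2), where this formulation appears.
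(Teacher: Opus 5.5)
Your Newton-iteration argument is correct, and it is essentially the proof the paper relies on: the paper gives no argument of its own and simply cites \cite{CF}, Chapter II, Appendix C, which carries out this iteration with $v(f'(x_k))$ held constant and the excess $v(f(x_k))-2\delta$ doubling at each step. One minor remark: discreteness of $v$ plays no role in your convergence step, since $\varepsilon_0>0$ is a fixed real number and $2^k\varepsilon_0\to\infty$ regardless.
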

\begin{proof}
    See \cite{CF} Chapter II, Appendix C.
\end{proof}
\begin{lemma}
    $F$ (as in Lemma \ref{tame_minpoly}) has a root in a complete discretely valued field $(L,w)\iff F$ has a root in $\mathcal{O}_w/p^2$.
\end{lemma}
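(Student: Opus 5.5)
The forward direction is immediate: a root $\alpha\in\mathcal O_w$ of $F$ (any root in $L$ is integral, since $F$ is monic with integral coefficients) reduces to a root of $F$ in $\mathcal O_w/p^2$. Here we regard $F=X^e-pa$ as having coefficients in $\mathcal O_w$, via a Cohen subring of $\mathcal O_w$ identified with $\mathcal O_0$ as in Lemma \ref{compatibility}. For the converse, I will use Hensel's lemma (Lemma \ref{Hensel}), with the tameness hypothesis $p\nmid e$ making the derivative as large as possible.

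Suppose $\bar x\in\mathcal O_w/p^2$ satisfies $F(\bar x)=0$, and let $x\in\mathcal O_w$ be any lift. Then $v_p(x^e-pa)\ge 2$. First I determine $v_p(x)$. Since $a$ is a unit, $v_p(pa)=1$. If $v_p(x^e)\neq 1$, the ultrametric inequality would give $v_p(x^e-pa)=\min(e\,v_p(x),1)\le 1$, which is a contradiction. Hence $e\,v_p(x)=1$, so $v_p(x)=1/e$; that is, $x$ is a uniformiser of $L$ (using $v(p)=e$).

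Next I compute the derivative: $F'(x)=ex^{e-1}$. Since $p\nmid e$, the integer $e$ is a unit, so
\[
v_p(F'(x))=\frac{e-1}{e}<1 .
\]
Therefore
\[
2v_p(F'(x))=\frac{2(e-1)}{e}<2\le v_p(F(x)).
\]
This is exactly the hypothesis of Lemma \ref{Hensel} (after rescaling $v$ to $v_p$), so $F$ has a root $\alpha\in\mathcal O_w$.

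The only real subtlety is making sense of ``$F$ has a root in $\mathcal O_w/p^2$''. The coefficient $a$ lives in $\mathcal O_0\subseteq\mathcal O_v$, so I must fix an embedding of the Cohen ring into $\mathcal O_w$, or at least into $\mathcal O_w/p^2$. I take this identification to be the one supplied by Lemma \ref{compatibility}: an isomorphism $C_K\cong C_L$ compatible with a given truncation isomorphism. The Hensel argument then only needs some lift of $a$ to a unit of $\mathcal O_w$. The key point is that the bound $2(e-1)/e<2$ holds uniformly in $e$, so truncation at level $2$ always suffices in the tame case.
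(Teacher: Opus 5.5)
Your proposal is correct and follows the paper's proof essentially step for step. You lift the root to $x\in\mathcal O_w$, use the ultrametric inequality with $v_p(pa)=1$ to force $v_p(x)=1/e$, use $p\nmid e$ to get $v_p(F'(x))=(e-1)/e$, and conclude by Hensel's lemma since $2(e-1)/e<2\le v_p(F(x))$.

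Your side remarks are inessential: the uniformiser observation is never used. One caution on the other remark: to get a root of $F$ itself, $a$ must be sent to its specific image under the Cohen-ring isomorphism of Lemma \ref{compatibility}, not to an arbitrary unit lift, since a different lift gives a root of a different polynomial. That is exactly how the paper applies this lemma afterwards.
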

\begin{proof}
Recall that $F(X)=X^e-pa$, where $a$ is a unit and $e$ is coprime to $p$. The forward implication is immediate. Conversely, suppose that $\overline{x}\in \mathcal{O}_w/p^2$ is a root of $F$, and choose a lift $x\in\mathcal{O}_w$. Then $F(x)=x^e-pa\in p^2\mathcal{O}_w$, so $w(F(x))\geq 2w(p)$. Since $w(pa)=w(p)$ and $w(x^e-pa)>w(pa)$, the ultrametric inequality gives $ew(x)=w(x^e)=w(p)$. Hence $w(x)=w(p)/e$. As $(e,p)=1$, we have $w(e)=0$, and therefore $w(F'(x))=w(ex^{e-1})=(e-1)w(x)=((e-1)/e)w(p)$. Thus $w(F(x))\geq 2w(p)>2((e-1)/e)w(p)=2w(F'(x))$. By Hensel's lemma (Lemma \ref{Hensel}), $F$ has a root in $\mathcal{O}_w$, and hence in $L$. 
\end{proof}
Hence we obtain:
\begin{lemma}
    For tamely ramified complete discretely valued fields:\[n(e,p) \leq2\]
\end{lemma}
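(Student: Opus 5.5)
We argue directly from the preceding lemma, combined with Lemma \ref{compatibility}, rather than through the general Lifting proposition. Let $(K,v)$ and $(L,w)$ be tamely ramified with initial ramification $e$, and let $\phi:\mathcal O_v/(p)^2\to\mathcal O_w/(p)^2$ be an isomorphism. The implication from $(K,v)\cong(L,w)$ to isomorphic truncations is trivial, so only the converse needs proof.

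First we set up the data on the $K$ side. By Lemma \ref{cohenlem} we fix a Cohen subring $C_K$ with $K_0=\operatorname{Frac}(C_K)$. By Lemma \ref{tame_minpoly} we choose a uniformiser $\pi$ of $K$ whose minimal polynomial over $K_0$ is $F=X^e-pa$, where $a\in C_K^\times$.

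Next we transport this data to $L$. Lemma \ref{compatibility} with $n=2$ gives a Cohen subring $C_L\subseteq\mathcal O_w$ and an isomorphism $\psi:C_K\to C_L$ that reduces to $\phi|_{C_K/p^2}$. Let $F^\psi=X^e-p\psi(a)\in C_L[X]$. The class $\bar\pi\in\mathcal O_v/(p)^2$ satisfies $\bar\pi^e=p\bar a$. Applying $\phi$, and using that $\phi$ agrees with $\psi$ modulo $p^2$ on $C_K$, we see that $\phi(\bar\pi)$ is a root of $F^\psi$ in $\mathcal O_w/(p)^2$.

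The preceding lemma applies to $F^\psi$, since it has the same shape: $\psi(a)$ is a unit and $p\nmid e$. Hence $F^\psi$ has a root $\beta\in\mathcal O_w$. Since $F^\psi$ is Eisenstein over $L_0=\operatorname{Frac}(C_L)$, it is irreducible, so $[L_0(\beta):L_0]=e$. By Lemma \ref{cohenlem}, $C_L$ is a Cohen subring of $\mathcal O_w$, so $L/L_0$ is totally ramified of degree $e$. Since $L_0(\beta)\subseteq L$ and both have degree $e$ over $L_0$, we get $L=L_0(\beta)$.

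Finally we build the isomorphism. The map $\psi$ extends to a field isomorphism $K_0\to L_0$. This extends further to $K=K_0[X]/(F)\to L_0[X]/(F^\psi)\cong L$ by sending $\pi\mapsto\beta$. Both fields are complete and discretely valued, and the valuation extends uniquely in a finite extension of a complete field, so this field isomorphism is automatically an isomorphism of valued fields. Therefore $n=2$ suffices, which gives $n(e,p)\le 2$.

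The main obstacle is the identification step. We must check that the Cohen subring $C_L$ produced by Lemma \ref{compatibility} is one for which $L/L_0$ is totally ramified of degree $e$. This holds because the proof of Lemma \ref{cohenlem} works for any Cohen subring. We must also check that the root of $F$ modulo $p^2$ is genuinely a root of the transported polynomial $F^\psi$. This holds because $\phi$ and $\psi$ agree modulo $p^2$ on $C_K$, and $a\in C_K$.
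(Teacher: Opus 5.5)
Your proposal is correct and follows the paper's own argument. You choose $\pi$ with minimal polynomial $X^e-pa$, carry its class mod $p^2$ to a root of the transported polynomial in $\mathcal O_w/(p)^2$, lift that root with the preceding Hensel-type lemma, and conclude by comparing degrees over the Cohen subfield. Your explicit use of Lemma \ref{compatibility} to build $F^\psi$ simply spells out the step the paper summarises as ``after identifying the corresponding Cohen subrings''.
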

\begin{proof}
     Let $(K,v)$ and $(L,w)$ be tamely ramified complete discretely valued fields of initial ramification $e$, and suppose that $\mathcal{O}_K/p^2\cong \mathcal{O}_L/p^2$. By Lemma \ref{tame_minpoly}, after identifying the corresponding Cohen subrings, we may choose a uniformiser $\pi$ of $K$ whose minimal polynomial over the Cohen subfield $K_0$ is $F(X)=X^e-pa$, with $a\in\mathcal{O}_0^\times$. The image of $\pi$ under the isomorphism $\mathcal{O}_K/p^2\cong \mathcal{O}_L/p^2$ is a root of $F$ in $\mathcal{O}_L/p^2$. By the preceding lemma, $F$ therefore has a root $\pi'\in L$. Since $F$ is Eisenstein of degree $e$, we have $[K:K_0]=[K_0(\pi'):K_0]=e$. Hence the embedding $K=K_0(\pi)\to L$ sending $\pi$ to $\pi'$ has image of ramification index $e$. Since $L$ also has initial ramification $e$, it follows that this embedding is an isomorphism. Thus $\mathcal{O}_K/p^2\cong\mathcal{O}_L/p^2$ implies $K\cong L$, and hence $n(e,p)\leq 2$. 
\end{proof}
Now we need to prove:
\begin{lemma}
    Suppose $e$ and $p$ are coprime and $e>1$. Then:\[
    n(e,p) > 1
    \]
\end{lemma}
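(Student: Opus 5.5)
We will exhibit two non-isomorphic complete discretely valued fields of initial ramification $e$ whose truncations $\mathcal O/(p)$ are isomorphic. Since $n(e,p)$ is the least $n$ for which truncation at $(p)^n$ determines the field, such a pair rules out $n=1$.

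The natural candidates are $K=\mathbb{Q}_p(\sqrt[e]{p})$ and $L=\mathbb{Q}_p(\sqrt[e]{pu})$, where $u\in\mathbb{Z}_p^\times$ is a unit that is not an $e$-th power in $\mathbb{Q}_p$. One could also use $\mathbb{Q}_p^{ur}$-type bases with a suitable residue field, but finite extensions of $\mathbb{Q}_p$ should suffice.

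\textbf{Step 1: the truncations agree.} Both $X^e-p$ and $X^e-pu$ are Eisenstein, so $K$ and $L$ are totally ramified of degree $e$ over $\mathbb{Q}_p$. Their valuation rings are $\mathbb{Z}_p[\pi]$ and $\mathbb{Z}_p[\pi']$, as in Lemma \ref{lem}. It follows that
\[
\mathcal O_K/(p)\cong \mathbb{F}_p[X]/(X^e)\cong \mathcal O_L/(p),
\]
since $X^e-p\equiv X^e-pu\equiv X^e \pmod p$.

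\textbf{Step 2: choice of $u$.} We need $K\not\cong L$. Choose $u$ to be a Teichmüller lift of a generator $\zeta$ of $\mathbb{F}_p^\times$.
\begin{itemize}
\item If $\gcd(e,p-1)>1$, then $\zeta$ is not an $e$-th power in $\mathbb{F}_p$.
\item If $\gcd(e,p-1)=1$, then every unit of $\mathbb{Z}_p$ is an $e$-th power. In this case $X^e-pu$ gives the same field, so we must vary instead. One fix is to allow $p$ to be replaced by $p$ times a unit from a larger unramified base $K_0=W(k)[1/p]$, where $k$ is a finite field with $\gcd(e,|k^\times|)>1$; for instance $k=\mathbb{F}_{p^r}$ with $e\mid p^r-1$, which exists because $p\nmid e$. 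We then run the same construction over $K_0$ with $u$ a Teichmüller lift of a non-$e$-th power in $k$.
\end{itemize}
So in general we work over $K_0=W(k)[1/p]$, comparing $K=K_0(\sqrt[e]{p})$ with $L=K_0(\sqrt[e]{pu})$. Step 1 goes through verbatim, giving $k[X]/(X^e)$ on both sides.

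\textbf{Step 3: non-isomorphism.} This is the main obstacle, because an abstract isomorphism $K\to L$ need not fix $K_0$.
\begin{itemize}
\item Any isomorphism of complete discretely valued fields induces an isomorphism of residue fields $k\to k$, which is a power of Frobenius. It then restricts to an automorphism $\sigma$ of $K_0$ lifting that power; this follows by uniqueness of the Witt ring, using Lemma \ref{compatibility}-type uniqueness or simply because $K_0$ is the maximal unramified subextension.
\item Composing with $\sigma^{-1}$ on $L$, note that $\sigma$ fixes $p$ and sends $u$ to another Teichmüller lift $u^{p^j}$, which is again a non-$e$-th power. We may therefore assume the isomorphism is $K_0$-linear.
\item Under a $K_0$-linear isomorphism, $L$ contains an element $\alpha$ with $\alpha^e=p$. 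The ratio $\alpha/\pi'$ is then a unit in $L$ whose $e$-th power is $u^{-1}$. Reducing to the residue field $k$, which is unchanged since the extension is totally ramified, shows that $\bar u$ is an $e$-th power in $k$, a contradiction.
\end{itemize}
Hence $K\not\cong L$ while $\mathcal O_K/(p)\cong\mathcal O_L/(p)$, so $n(e,p)>1$.
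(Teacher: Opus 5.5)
Your proposal is correct and follows essentially the same route as the paper: an unramified base with residue field $\mathbb{F}_{p^f}$ where $e\mid p^f-1$, the pair $K_0(\sqrt[e]{p})$ versus $K_0(\sqrt[e]{pu})$ with $\bar u$ not an $e$-th power, both truncations equal to $k[X]/(X^e)$, and a contradiction obtained by reducing the ratio of the two $e$-th roots to the residue field. The one difference is that your reduction to $K_0$-linear isomorphisms in Step 3 is unnecessary, because $u$ already lies in $L$: any isomorphism $K\to L$ sends $\sqrt[e]{p}$ to some $\gamma\in L$ with $\gamma^e=p$, so the unit $\sqrt[e]{pu}/\gamma\in L$ has $e$-th power $u$, which is exactly how the paper concludes.
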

\begin{proof}
    It suffices to construct for each prime $p$ and integer $e$ coprime to $p$ a pair of non-isomorphic complete discretely valued fields with isomorphic first truncation. To this end let $f$ be the smallest integer satisfying $e\mid p^f-1$. Let $K$ be the unramified extension of degree $f$ over $\mathbb{Q}_p$. Then $K$ has residue field $k = \mathbb{F}_{p^f}$. Now $e\mid p^f-1 = |k^\times|$ so $k^\times$ contains elements that are not $e$th powers (as $k^\times$ is cyclic). Let $\bar{u}$ be such an element and let $u$ be a lift of $\bar{u}$ in $\mathcal{O}_K^\times$.\\
    Now let $K_1 = K(\alpha)$ where $\alpha^e = p$ and $K_2 = K(\beta)$ where $\beta^e = up$. Since $u$ is a unit, $v_p(p) = v_p(up) = 1$ and hence the minimal polynomials of $\alpha,\beta$ are Eisenstein and the two extensions are degree $e$ tamely totally ramified. We have that $\mathcal{O}_{K_1}\cong\mathcal{O}_K[X]/(X^e-p)$ and $\mathcal{O}_{K_2}\cong\mathcal{O}_K[X]/(X^e-up)$. Reducing modulo $p$, $\mathcal{O}_{K_1}/p\mathcal{O}_{K_1}\cong\mathcal{O}_K[X]/(X^e-p,p)\cong\frac{\mathcal{O}_K/p\mathcal{O}_K[X]}{(X^e)}\cong k[X]/(X^e).$ Also $\mathcal{O}_{K_2}/p\mathcal{O}_{K_2}\cong\mathcal{O}_K[X]/(X^e-up,p)\cong\frac{\mathcal{O}_K/p\mathcal{O}_K[X]}{(X^e)}\cong k[X]/(X^e).$ And therefore $\mathcal{O}_{K_1}/(p)\cong\mathcal{O}_{K_2}/(p)$.\\ Now suppose $K_1\cong K_2$. $K_2$ must contain an element $\gamma$ satisfying $\gamma^e = p$. Let $c = \frac{\beta}{\gamma}$. Then $c^e = \frac{\beta^e}{\gamma^e}=\frac{up}{p}=u$. Reducing modulo the maximal ideal of $\mathcal O_{K_2}$,
    we obtain $\bar c^e=\bar u$ in $k$, contradicting the choice of $\bar u$.\\ Hence $K_1\ncong K_2$.
\end{proof}
Combining the previous two lemmas we obtain:
\begin{theorem}\label{tame}
    With hypotheses as above:\[
    n(e,p) = 2
    \]
\end{theorem}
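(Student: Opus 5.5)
The plan is simply to combine the two immediately preceding lemmas. The hypotheses ``as above'' are that $e>1$ and $p\nmid e$, so $(K,v)$ and $(L,w)$ range over tamely ramified complete discretely valued fields of initial ramification $e$.

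For the upper bound, I will invoke the lemma showing $n(e,p)\leq 2$ for tamely ramified fields. That lemma establishes the nontrivial direction of the equivalence in the definition of $n(e,p)$ with $n=2$: an isomorphism $\mathcal{O}_v/(p)^2\cong\mathcal{O}_w/(p)^2$ forces $(K,v)\cong(L,w)$. The forward direction is automatic for every $n$, since an isomorphism of valued fields restricts to an isomorphism of valuation rings carrying $p$ to $p$, and so it descends to the truncations. Hence $n=2$ satisfies the defining property, and minimality gives $n(e,p)\leq 2$.

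For the lower bound, I will invoke the lemma showing $n(e,p)>1$. It produces the explicit pair $K_1=K(\alpha)$ with $\alpha^e=p$ and $K_2=K(\beta)$ with $\beta^e=up$, where $\bar u$ is a non-$e$th power in $\mathbb{F}_{p^f}^\times$. Both fields are totally ramified of degree $e$ over the unramified extension $K$ of $\mathbb{Q}_p$, so both have initial ramification $e$ and fall in the class under consideration. Their truncations modulo $p$ are both $k[X]/(X^e)$, yet the fields are not isomorphic. Thus $n=1$ fails the defining property, so $n(e,p)\geq 2$. Combining the two bounds yields $n(e,p)=2$.

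There is essentially no obstacle, since both inequalities are already proved. The only point to check is one of bookkeeping. The definition of $n(e,p)$ quantifies over all fields of initial ramification $e$, and when $p\nmid e$ every such field is tame by Definition~\ref{wilddefn}, so the tame-case lemmas cover every pair. I also need the counterexample fields to be complete discretely valued of mixed characteristic $(0,p)$. This holds because each is a finite extension of the complete field $K$.
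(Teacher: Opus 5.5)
Your proposal is correct and follows the paper's approach: the paper likewise obtains the theorem by combining the lemma giving $n(e,p)\leq 2$ with the lemma giving $n(e,p)>1$. Your additional bookkeeping, namely that the forward implication holds automatically and that the counterexample fields lie in the class under consideration, is accurate.
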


\section{Wildly Ramified Case}

In this section we consider the case of wild ramification (see Definition \ref{wilddefn}). Let $(K,v)$ have initial ramification $e=np^k$, where $(n,p)=1$. The main proposition proved in this section is:
\begin{prop}\label{kras_bound}
    Let $L/K$ be a degree $e$ totally ramified extension of complete discretely valued fields of mixed characteristic $(0,p)$ and let $\pi$ be a uniformiser of $L$. Then: \[
    \kappa(\pi)\leq \frac{1}{p-1} +\frac{1}{e}
    \]
\end{prop}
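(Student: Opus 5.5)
The plan is to pass to a Galois closure and use the standard bound on ramification breaks, which says that automorphisms moving a uniformiser "too little" must be trivial; the $\frac{1}{e}$ in the statement is simply $v_p(\pi)$. Write the conjugates of $\pi$ over $K$ as $\pi'=u\pi$ with $u$ a unit in $K^{al}$. Then
\[
v_p(\pi-\pi')=\tfrac1e+v_p(u-1),
\]
so it suffices to show $v_p(u-1)\le \frac{1}{p-1}$ for every conjugate $\pi'\neq\pi$. Let $M$ be the Galois closure of $L$ over $K$ and $G=\mathrm{Gal}(M/K)$. Choose $\sigma\in G$ with $\sigma(\pi)=\pi'$, so $\sigma\notin\mathrm{Gal}(M/L)$ because $\pi$ generates $L$ over $K$ by Lemma \ref{lem}. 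Suppose for contradiction that $v_p(u-1)>\frac1{p-1}$. I will work with the valuation $v_M$ normalised on $M$, and write $e_M=v_M(p)$.

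\emph{Step 1: reduce to a $p$-power order automorphism in a deep ramification group.} Since $u\equiv 1$ modulo the maximal ideal, $\sigma$ acts trivially on the residue field of $L$. I first need $\sigma$ to lie in the inertia group, and then in the wild inertia group $G_1$. The idea is that $\sigma$ fixes $\pi$ up to a principal unit, and residue extensions are controlled by the unramified part, which $\sigma$ must fix. Replacing $\sigma$ by a suitable element of the coset $\sigma\,\mathrm{Gal}(M/L)$ should let me assume $\sigma\in G_1$, so $\sigma$ has $p$-power order. Raising to a power, I may further take $\sigma$ of exact order $p$ while still moving $\pi$. This needs care: if $\sigma^{p^s}$ fixes $\pi$ for the first time at level $s$, then $\sigma^{p^{s-1}}$ is the element to use. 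The ratio $\sigma^{j}(\pi)/\pi$ is a product of conjugates of $u$, so it still satisfies $v_p(\cdot-1)>\frac1{p-1}$.

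\emph{Step 2: the break bound.} For $\tau\in G$ with $\tau(\pi)=u_\tau\pi$ and $v_p(u_\tau-1)=\delta$, I will show that $\tau$ lies in $G_i$ for the lower numbering with $i\ge e_M\delta$. The point is that $v_M(\tau x/x-1)\ge\delta e_M$ holds for all $x\in L^\times$, which I check directly on $\pi$ and on $\mathcal O_K^\times$. Transporting this to a uniformiser of $M$ requires some argument; this transfer from $L$ to $M$ is exactly where the upper/lower numbering enters. I would try Herbrand's function, using that the image of $G_i$ in $G/\mathrm{Gal}(M/L)$ is governed by the upper numbering. Once this is done, it remains to invoke the classical fact (Serre, \emph{Local Fields}, IV §2, Exercise 3) that if $\tau\in G_i\setminus G_{i+1}$ has order $p$, then $i\le \frac{e_M}{p-1}$. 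Combining the two gives $\delta\le\frac1{p-1}$, the desired contradiction.

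I expect Step 2 to be the main obstacle: passing between the valuation of $\sigma(\pi)/\pi-1$ for the non-Galois subextension $L$ and the ramification filtration on $M$. If the Herbrand transfer becomes unwieldy, my fallback is a direct argument avoiding $M$. Write $\sigma^{p}(\pi)=\pi\prod_{j<p}\sigma^j(u)$ and use that $\log$ is injective and isometric on $\{x: v_p(x-1)>\frac1{p-1}\}$. This turns the multiplicative cocycle condition for an order-$p$ element into an additive one, $\sum_j\sigma^j(\log u)=0$. Expanding $\log u$ to leading order modulo higher valuation should then force $v_p(\log u)\le\frac1{p-1}$, i.e.\ $v_p(u-1)\le\frac1{p-1}$.
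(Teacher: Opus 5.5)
The gap is Step 2. It is not merely hard: as stated it is false, because the inequality you need runs the wrong way. For $\tau$ in the inertia group and $i\ge1$, $\tau\in G_i$ means $v_M(\tau(x)/x-1)\ge i$ for \emph{every} $x\in M^\times$, and the minimum over $x$ is attained at a uniformiser $\pi_M$ of $M$. Once $M\neq L$, $\pi$ is not a uniformiser of $M$, so a lower bound on $v_M(\tau(\pi)/\pi-1)$ gives only an upper bound on the index of $\tau$, never a lower one.

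Concretely, let $K=\mathbb{Q}_2$, $\pi=2^{1/4}$, $L=K(\pi)$, $M=L(i)$ (so $e_M=8$), $N=\mathbb{Q}_2(\zeta_8)\subseteq M$, and let $\tau$ generate $\mathrm{Gal}(M/N)$, so $\tau(\pi)=-\pi$. Then $v_M(\tau(\pi)/\pi-1)=v_M(2)=8$, i.e.\ $\delta=1$, and your claim would put $\tau$ in $G_8$. But $M=N(\sqrt{\sqrt2})$, and every Kummer generator $\sqrt2\,c^2$ ($c\in N^\times$) has even valuation. Hence the break of $M/N$ is strictly below $2v_N(2)=8$ (it is $7$), so $\tau\notin G_8$; in fact no element of the coset $\tau\,\mathrm{Gal}(M/L)$ lies in $G_8$.

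Herbrand's function cannot repair this. With $H=\mathrm{Gal}(M/L)$ and $v_L(\pi)=1$, the correct transfer is $v_L(\sigma(\pi)-\pi)-1=\varphi_{M/L}\bigl(\max_{s\in\sigma H}(i_G(s)-1)\bigr)$. Since $\varphi_{M/L}(x)>x/e_{M/L}$ for $x>0$ when $M/L$ is wildly ramified, the break bound $i_G(s)-1\le e_M/(p-1)$ only yields $v_L(\sigma(\pi)/\pi-1)\le\varphi_{M/L}(e_M/(p-1))$. This is strictly larger than the required $v_L(p)/(p-1)$, so bounds on the breaks of $M/K$ alone do not give the proposition.

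Your fallback has the same defect. For $\sigma$ of order $p$, the identity $\sum_{j<p}\sigma^j(\log u)=0$ just says that $\log u$ has trace zero down to $M^{\langle\sigma\rangle}$. This holds equally for $\log(\sigma(x)/x)=\sigma(y)-y$ with $x=\exp(y)$ and $v_p(y)$ arbitrarily large, so no leading-order analysis of it can force $v_p(\log u)\le\frac1{p-1}$.

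What is missing is any use of the fact that $\pi$ is a root of an Eisenstein polynomial $f=\sum_{j=0}^e a_jX^j$ ($a_e=1$) over $K$. For instance:
\begin{itemize}
\item The numbers $\pi'/\pi-1$, $\pi'\neq\pi$, are the roots of $\sum_{m\ge1}b_mY^{m-1}$, where $b_m=\sum_{j\ge m}\binom jm a_j\pi^{j-e}$.
\item The terms $ja_j\pi^{j-e}$ of $b_1$ have $v_L$-values $v_L(ja_j)+j-e$, pairwise distinct modulo $e$, so $v_L(b_1)$ is their minimum.
\item The estimate $v_p\binom jm\ge v_p(j)-v_p(m)\ge v_p(j)-\frac{m-1}{p-1}$ gives $v_L(b_m)\ge v_L(b_1)-(m-1)\frac{v_L(p)}{p-1}$.
\item The Newton polygon then bounds $v_p(\pi'/\pi-1)$ by $\frac1{p-1}$, with no hypothesis on the residue field.
\end{itemize}
The paper instead passes to $K^g$ with perfect residue field, which preserves the Eisenstein polynomial and all root differences, and then quotes Hou--Keating. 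Your Galois-theoretic route would need such a reduction in any case. The lower numbering and Herbrand theory you invoke require the residue extension of $M/K$ to be separable, and you do not address this.
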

To prove this, we will need the help of the following two results:
\begin{lemma}\label{perfect}
    Let $K$ be a complete discretely valued field with imperfect residue field. Then there exists an extension of fields $K^g/K$ such that $K^g$ is a complete discretely valued field with perfect residue field and $e(K^g/K) =1.$
\end{lemma}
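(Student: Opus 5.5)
The plan is to adjoin to $K$ the residue-field elements it lacks, via the standard ``Cohen'' construction that makes the residue field perfect without introducing ramification. Write $k=Kv$ and let $k^{p^{-\infty}}$ be its perfect closure. I would realise an extension with this residue field as a directed union of finite unramified-type steps, and then complete.

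\textbf{Step 1: a single step.} Choose a $p$-basis $B$ of $k$ and lifts $\tilde b\in\mathcal O_K$ for $b\in B$. For one $b$, adjoin a root $\beta$ of $X^p-\tilde b$. The reduction $X^p-b$ is irreducible over $k$, since $b\notin k^p$. Hence the valuation extends uniquely to $K(\beta)$ with residue field $k(b^{1/p})$ of degree $p=[K(\beta):K]$. The fundamental equality $ef=[K(\beta):K]$ for complete discretely valued fields then forces $e=1$. So $K(\beta)$ is again complete and discretely valued, with the same uniformiser $\pi$.

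\textbf{Step 2: iteration and union.} I would iterate Step 1, adjoining compatible roots $\tilde b^{1/p^m}$ for all $b\in B$ and all $m$. Each finite stage is unramified over $K$, with residue field $k(B^{1/p^m}\text{-finite part})$. The union $K'$ of these stages is therefore a valued field with value group $vK$ and with $\pi$ still a uniformiser. Its residue field is $\bigcup_m k(B^{1/p^m})=k^{p^{-\infty}}$, because $k^{1/p^m}=k(B^{1/p^m})$ by the definition of a $p$-basis. If $B$ is infinite, I would handle this by transfinite (Zorn) union over finite subsets; the value group and residue field behave well under directed unions.

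\textbf{Step 3: completion.} Set $K^g$ to be the completion of $K'$. Completion preserves the value group and the residue field, so $K^g$ is complete and discretely valued, with perfect residue field $k^{p^{-\infty}}$ and $e(K^g/K)=1$.

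The main obstacle I expect is Step 2's bookkeeping. Over an infinite $p$-basis, one must check that the successive extensions stay unramified. The point is that each new $X^p-\tilde b$ remains irreducible mod $\mathfrak m$ over the enlarged residue field, which follows from $p$-independence of $B$. Discreteness also has to survive the infinite union, which holds because the value group of each stage is literally $vK$. An alternative that avoids choosing roots is to build $K^g$ as $\operatorname{Frac}(W(k^{p^{-\infty}})\otimes_{C_K}\mathcal O_K)$ after embedding $C_K\to W(k^{p^{-\infty}})$ using Lemma \ref{cohenlem} and the functoriality of Cohen rings. I would fall back on this if the direct approach becomes messy.
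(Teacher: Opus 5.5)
Your argument is correct, but it takes a different route from the paper.

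The paper constructs nothing explicitly. It quotes Cohen's Lemma 12 to obtain a complete local ring $R\supseteq\mathcal O_K$, unramified over $\mathcal O_K$, with residue field $k^{\mathrm{perf}}$. It then checks by hand, via the Krull intersection theorem, that $R$ is a domain, hence a complete discrete valuation ring with uniformiser $\pi$, and sets $K^g=\operatorname{Frac}(R)$.

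You instead build the field explicitly, as the completion of $K'=K(\tilde b^{1/p^m}: b\in B,\ m\geq 1)$ inside a fixed $\overline K$. In effect you prove by hand the special case of Cohen's lemma that is needed, namely for the purely inseparable residue extension $k^{\mathrm{perf}}/k$. Your version is self-contained and uses only the fundamental inequality $ef\leq n$, basic facts about $p$-bases, and completions. Integrality comes for free, because every ring in sight sits inside a field ($\overline K$ or the completion of $K'$). The cost is the $p$-basis bookkeeping. The paper's version is shorter, but the real content is outsourced to the citation.

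Two simplifications to your write-up:
\begin{itemize}
\item Your ``main obstacle'' disappears if, for each finite $S\subseteq B$ and $m\geq 1$, you compare degrees directly. You have $[K(\tilde b^{1/p^m}:b\in S):K]\leq p^{|S|m}=[k(b^{1/p^m}:b\in S):k]\leq f$, where the equality is where $p$-independence enters. Together with $ef\leq n$ this forces $e=1$ and $f=p^{|S|m}$, with no need to track irreducibility step by step.
\item No Zorn argument is needed. Once you fix compatible $p$-power roots of each $\tilde b$ in $\overline K$, the field $K'$ is simply the directed union over the pairs $(S,m)$.
\end{itemize}
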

\begin{proof} 
Let $k$ be the residue field of $K$. By \cite{cohen1946structure} Lemma 12, p.~77, there exists a complete local ring $R$ containing $\mathcal{O}_K$, unramified with respect to $\mathcal{O}_K$, whose residue field is the perfect closure $k^{\mathrm{perf}}$ of $k$. Let $\pi$ be a uniformiser of $K$. Since $R$ is unramified with respect to $\mathcal{O}_K$, we have $\mathfrak{m}_R=(\pi)$. Since $R$ is a complete Noetherian local ring, the Krull intersection theorem gives $\bigcap_{n\geq 0}\mathfrak{m}_R^n=0$. Hence, for every nonzero $x\in R$, there exists $n\geq 0$ such that $x\in(\pi^n)\setminus(\pi^{n+1})$. Thus $x=\pi^n u$ for some $u\in R\setminus(\pi)=R^\times$. Since $\pi$ is nonzero, being an element of $\mathcal O_K\subseteq R$, it follows that the product of any two nonzero elements of $R$ is nonzero. Hence $R$ is an integral domain. Thus $R$ is a complete discrete valuation ring with uniformiser $\pi$. Setting $K^g=\operatorname{Frac}(R)$, the residue field of $K^g$ is $k^{\mathrm{perf}}$, and, since $\pi$ is a uniformiser of both $K$ and $K^g$, we have $e(K^g/K)=1$.
\end{proof}

\begin{lemma}\label{pkras}
    Let $L/K$ be a degree $e$ totally ramified extension of complete discretely valued fields of mixed characteristic $(0,p)$ both having perfect residue fields and let $\pi$ be a uniformiser of $L$. Then: \[
    \kappa(\pi)\leq \frac{1}{p-1}+\frac{1}{e}
    \]
\end{lemma}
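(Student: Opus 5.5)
The plan is to reduce the statement to a bound on ramification numbers in a Galois closure, where the perfect residue field hypothesis lets us use the classical theory of ramification groups from \cite{serre1979local}. Let $\pi=\pi_1,\dots,\pi_e$ be the conjugates of $\pi$ over $K$, and let $M$ be the Galois closure of $L/K$, with group $G$ and ramification index $e_M$ over $K$. Since $L/K$ is totally ramified, the valuation $v_p$ extends uniquely. For $\sigma\in G$ with $\sigma\pi\neq\pi$ we must show
\[
v_p(\sigma\pi-\pi)\leq \tfrac{1}{p-1}+\tfrac{1}{e}.
\]
Writing $\sigma\pi-\pi=\pi\,(\sigma\pi/\pi-1)$ and using $v_p(\pi)=1/e$, this reduces to showing that $y_\sigma:=\sigma\pi/\pi-1$ satisfies $v_p(y_\sigma)\leq 1/(p-1)$.

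For the second step, the $y_\sigma$ for $\sigma\pi\neq\pi$ are exactly the nonzero roots of
\[
g(Y)=\pi^{-e}F\bigl(\pi(1+Y)\bigr),
\]
where $F=\sum_j a_jX^j$ is the Eisenstein minimal polynomial of $\pi$ (Lemma \ref{lem}). Put $h(Y)=g(Y)/Y$, which has degree $e-1$ and roots precisely the $y_\sigma$. Its coefficients are
\[
c_m=\sum_j a_j\pi^{j-e}\binom{j}{m+1}.
\]
For $j<e$ we have $v_p(a_j\pi^{j-e})\geq j/e>0$ whenever $j\geq 1$. Write $e=np^k$ with $p\nmid n$. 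Taking $m+1=p^k$, the term $\binom{e}{p^k}$ is a $p$-adic unit, so $c_{p^k-1}$ is a unit. I would then read off the maximal root valuation from the Newton polygon of $h$: it equals the largest slope, namely $\max_r\bigl(v_p(c_0)-v_p(c_r)\bigr)/r$ over the left part of the polygon. Since every $c_r$ is integral, this reduces to bounding the left segment(s) between $r=0$ and the first vertex where a unit coefficient appears.

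The key quantitative input is a lower bound on $v_p(c_r)$ for $1\leq r<p^k-1$ in terms of $v_p(c_0)=v_p(F'(\pi))+(1-e)/e$, which I would get by comparing $v_p\binom{e}{m+1}$ with $v_p(e)$ and using $v_p(a_j)\geq 1$ for $j<e$. In the tame and the pure $p^k$-th-root examples, such as $X^{p^k}-p$ with roots $\zeta\pi$, this gives exactly $v_p(\zeta-1)\leq 1/(p-1)$, with equality for primitive $p$-th roots of unity. So the bound is sharp, and the Newton polygon computation should reproduce it.

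If the Newton polygon bookkeeping becomes unwieldy, the fallback is the ramification-group route in $M$: for $\sigma\in G_t\setminus G_{t+1}$, compare $v_M(\sigma\pi/\pi-1)$ with $t$ via $\pi=\pi_M^{e_M/e}u$, and use the standard bound on wild ramification breaks (\cite{serre1979local} Chapter IV, §2, exercises). I expect the main obstacle to be precisely this sharp estimate $v_p(y_\sigma)\leq 1/(p-1)$. The naive break bound $t\leq pe_M/(p-1)$ loses a factor of $p$, so in either approach I will need to use that the worst case comes from $\sigma$ acting through a $p$-th root of unity. I will first try to establish this by controlling the last nontrivial Newton polygon segment, via the observation that the unit coefficient sits at degree $p^k-1$.
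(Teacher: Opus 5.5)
You set things up correctly. The reduction to $v_p(y)\le 1/(p-1)$ for the roots $y$ of $h$ is sound: in general $v_p(\pi)=1/(e\,v_K(p))\le 1/e$, which is enough. Reading off the largest root valuation as $\max_{r\ge1}(v_p(c_0)-v_p(c_r))/r$ is also correct.

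The gap is the key estimate, which you do not prove, and the route you indicate for it fails. If you bound the $j<e$ contributions to $c_r$ only through $v_p(a_j)\ge 1$, you lose all information about how large $v_p(c_0)$ is. Take $F=X^{p^2}+p^NX^2+p$ with $N\ge 3$. Then $v_p(c_0)=2$, attained by the $j=e$ term. But $v_p(a_2)\ge 1$ only gives $v_p(a_2\pi^{2-p^2})\ge 2/p^2$ for the $j=2$ term of $c_1$, so your method yields at best the slope bound $2-2/p^2>1/(p-1)$.

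Aiming at the last nontrivial segment is also misdirected. The largest root valuation comes from the first segment, and the unit $c_{p^k-1}$ only handles $r\ge p^k-1$.

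Your fallback does not help as sketched either. No factor $p$ is lost in the break bound: passing to a power of $\sigma$ of order $p$ gives $t\le v_M(p)/(p-1)$, which in $v_p$-terms is already the desired $1/(p-1)$ for $\pi_M$. The real difficulty is transferring this to the uniformiser of the possibly non-Galois $L$. Writing $\pi=\pi_M^{e(M/L)}u$ in general only gives a lower bound for $v_M(\sigma\pi/\pi-1)$, because $\sigma u/u$ is not controlled from above.

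The missing idea is a term-by-term comparison with $c_0=\pi^{1-e}F'(\pi)$.
\begin{itemize}
\item The terms $ja_j\pi^{j-e}$, for $1\le j\le e$, have pairwise distinct valuations, since they are distinct modulo $v_p(K^\times)=e\,v_p(\pi)\mathbb{Z}$. Hence $v_p(c_0)=\min_j v_p(ja_j\pi^{j-e})$.
\item From $(r+1)\binom{j}{r+1}=j\binom{j-1}{r}$ and $v_p(r+1)\le r/(p-1)$ (because $p^s-1\ge s(p-1)$), you get $v_p\binom{j}{r+1}\ge v_p(j)-r/(p-1)$.
\item Therefore $v_p(c_r)\ge v_p(c_0)-r/(p-1)$ for all $r\ge 1$.
\item If $h(y)=0$ with $v_p(y)>1/(p-1)$, then every $c_ry^r$ with $r\ge1$ has valuation strictly greater than $v_p(c_0)$, which is impossible.
\end{itemize}

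With this step inserted, your argument becomes a complete, elementary proof. It needs neither a Galois closure nor perfect residue fields. That is a genuinely different route from the paper, which only cites Hou--Keating's Lemma 2.2 together with Serre's Chapter IV, and must pass through $K^g$ to handle imperfect residue fields.
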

\begin{proof}
    The statement in the case where $K=\mathbb{Q}_p$ can be found in \cite{Hou-Keating} Lemma 2.2. The proof given there holds in its same form for the general case of $K$ being a discretely valued field of mixed characteristic $(0,p)$ with perfect residue field. This is because the machinery used in the proof relies on the hypotheses and results of Chapter IV of \cite{serre1979local}. In this chapter, the fields considered are finite extensions of complete discretely valued fields such that the residue field extension is separable. This condition is clearly satisfied in our case as every  finite extension of a perfect field is separable.
\end{proof}
\begin{proof}[Proof of Proposition \ref{kras_bound}]
    If $K$ has perfect residue field, the result follows directly from Lemma \ref{pkras}. Suppose therefore that the residue field of $K$ is imperfect, and let $K^g$ be as in Lemma \ref{perfect}. Since $e(K^g/K)=1$, we may normalise the valuation $v^g$ on $K^g$ so that $v^g|_K=v$. Let $F\in K[X]$ be the minimal polynomial of $\pi$ over $K$. By Lemma \ref{lem}, $F$ is Eisenstein of degree $e$. Hence $F$ remains Eisenstein over $K^g$. Put $L^g=K^g(\pi)$. Then $F$ is the minimal polynomial of $\pi$ over $K^g$, so $L^g/K^g$ is totally ramified of degree $\deg(F)$. In particular, since $K^g$ has perfect residue field, so does $L^g$.
    
    Fix an embedding $\overline{K}\hookrightarrow\overline{K^g}$ over $K$, and let $\overline{v^g}$ be an extension of $v^g$ to $\overline{K^g}$. Then $\overline{v^g}|_{\overline{K}}$ is an extension of $v$ to $\overline{K}$. Since $K$ is complete, it is Henselian, and $v$ has a unique extension to $\overline{K}$. Thus $\overline{v^g}|_{\overline{K}}=\overline{v}$. Moreover, since $F$ remains irreducible over $K^g$, the conjugates of $\pi$ over $K^g$ are precisely the roots of $F$, hence are the same as its conjugates over $K$. Therefore all valuations of the differences $\pi-\pi_i$, and in particular the quantities occurring in the definition of $\kappa(\pi)$, are unchanged after passing from $K$ to $K^g$. The required bound now follows by applying the perfect residue field case to $L^g/K^g$.
\end{proof}

Putting it all together we have:
\begin{theorem}\label{mainbound}
Suppose $k=v_p(e)\geq1$. Then $n(e,p)\leq k+2+\frac{1}{p-1}$.
\end{theorem}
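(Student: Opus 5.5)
The plan is to feed the Lifting Proposition with an explicit upper bound on $\kappa(\pi)+S$. Since the Proposition only needs $n>\kappa(\pi)+S$, it is enough to show that every uniformiser $\pi$ of every admissible $(K,v)$ satisfies
\[
\kappa(\pi)+S< k+2+\tfrac{1}{p-1}.
\]
Then any integer $n$ with $n\ge k+2+\frac1{p-1}$ is large enough to lift truncation isomorphisms to field isomorphisms. The reverse implication in the definition of $n(e,p)$ is trivial, so this gives the stated upper bound on $n(e,p)$.

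First, fix $(K,v)$ with $v_p(e)=k\ge1$. Take $K_0$ as in Lemma \ref{cohenlem} and a uniformiser $\pi$ whose minimal polynomial $F$ over $K_0$ is Eisenstein of degree $e$ (Lemma \ref{lem}). Next, $S$ is exactly the different exponent:
\[
S=\sum_{\pi'\neq\pi}v_p(\pi-\pi')=v_p(F'(\pi)).
\]
Write $F=X^e+\sum_{i<e}a_iX^i$ with $v_p(a_i)\ge1$. The terms of $F'(\pi)=e\pi^{e-1}+\sum_{i<e} i a_i\pi^{i-1}$ have valuations $v_p(e)+\frac{e-1}{e}=k+\frac{e-1}{e}$ and $v_p(ia_i)+\frac{i-1}{e}$. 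These valuations lie in distinct classes modulo $\frac1e\mathbb Z$, or at least the leading term bounds the minimum. So the standard estimate gives
\[
S\le k+\tfrac{e-1}{e}=k+1-\tfrac1e.
\]
This is the usual bound $d\le e-1+v(e)$ for the different, rescaled by $v_p$. It holds for imperfect residue fields as well, since the argument only uses Eisenstein-ness.

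Then I apply Proposition \ref{kras_bound} to $K/K_0$, which gives $\kappa(\pi)\le\frac1{p-1}+\frac1e$. Adding the two bounds,
\[
\kappa(\pi)+S\le k+1+\tfrac1{p-1}<k+2+\tfrac1{p-1}.
\]
Therefore every integer $n\ge k+2+\frac1{p-1}$ satisfies $n>\kappa(\pi)+S$, and the Lifting Proposition yields $(K,v)\cong(L,w)$ whenever $\mathcal O_v/(p)^n\cong\mathcal O_w/(p)^n$. Minimality of $n(e,p)$ then gives $n(e,p)\le\lceil k+1+\frac1{p-1}+\epsilon\rceil\le k+2+\frac1{p-1}$. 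For $p$ odd this means $n(e,p)\le k+2$. For $p=2$ it means $n(e,2)\le k+3$.

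The main delicate point is the different bound $S\le k+1-\frac1e$. I will justify it by noting that the term $e\pi^{e-1}$ has valuation exactly $k+\frac{e-1}{e}$, and every other term $ia_i\pi^{i-1}$ has a valuation in $\frac1e\mathbb Z$ whose class modulo $1$ is $\frac{i-1}{e}$, which differs from that of the leading term. Hence no cancellation can push the minimum up, and the minimum is at most the leading term's valuation, which is the upper bound I need. I must also check that the Lifting Proposition's use of the identification of $K_0$ with $L_0$ (Lemma \ref{compatibility}) requires nothing beyond $n\ge1$; this is already built into that proof.
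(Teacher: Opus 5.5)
Your proof is correct and follows the paper's argument. You bound $S\le k+1-\frac1e$ via the different estimate, bound $\kappa(\pi)$ by Proposition \ref{kras_bound}, and apply the Lifting Proposition with the least integer exceeding $k+1+\frac1{p-1}$. The one difference is that you verify the different estimate directly from the Eisenstein polynomial, where the paper cites Serre.

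Two slips to fix. First, your opening reduction to integers $n\ge k+2+\frac1{p-1}$ would only give $n(e,p)\le k+3$ for odd $p$, which does not prove the statement; it is the sharper threshold $n>k+1+\frac1{p-1}$ in your final step that yields $n(e,p)\le k+2$. Second, the valuations of the terms of $F'(\pi)$ lie in distinct classes modulo $\mathbb{Z}$, not modulo $\frac1e\mathbb{Z}$.
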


\begin{proof}
Let $(K,v)$ be an arbitrary complete discretely valued field of initial ramification $e$, let $K_0$ be as in Lemma \ref{cohenlem}, and choose a uniformiser $\pi$ of $K$. By Lemma \ref{lem}, the extension $K/K_0$ is totally ramified of degree $e$ and $\pi$ generates $K$ over $K_0$.

By \cite[Chapter III, \S6, Corollary 2 to Proposition 11]{serre1979local}, $\sum_{\pi'\neq\pi}v(\pi-\pi')=d_{K/K_0}$, where $\pi'$ runs over the conjugates of $\pi$ over $K_0$ and $d_{K/K_0}$ denotes the different exponent. By \cite[Chapter III, \S6, Remark 1 following Proposition 13]{serre1979local}, $d_{K/K_0}\leq e-1+v(e)$. Since $v_p=v/e$, we obtain $S=\sum_{\pi'\neq\pi}v_p(\pi-\pi')\leq \frac{e-1+v(e)}{e}=1-\frac{1}{e}+v_p(e)=k+1-\frac{1}{e}$.

On the other hand, Proposition \ref{kras_bound} gives $\kappa(\pi)\leq\frac{1}{p-1}+\frac{1}{e}$. Hence $\kappa(\pi)+S\leq k+1+\frac{1}{p-1}$.

Therefore, by the Lifting Proposition, every integer $N$ satisfying $N>k+1+\frac{1}{p-1}$ has the required lifting property, uniformly for all complete discretely valued fields of initial ramification $e$. Taking $N=\left\lfloor k+1+\frac{1}{p-1}\right\rfloor+1$, we obtain $n(e,p)\leq \left\lfloor k+1+\frac{1}{p-1}\right\rfloor+1\leq k+2+\frac{1}{p-1}$, as required.
\end{proof}

\section{Optimality}
By constructing pairs of non-isomorphic fields with sufficiently long isomorphic truncations, we can show that the bounds obtained in the previous section are optimal. 

\begin{prop}\label{p=2}
    If $2\mid e$,\[
    n(e,2) = k+3
    \] 
\end{prop}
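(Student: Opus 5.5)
The equality splits into an upper bound and a lower bound. For the upper bound I would take $p=2$ and $k=v_2(e)\geq 1$ in Theorem \ref{mainbound}. This gives $n(e,2)\leq k+2+\frac{1}{2-1}=k+3$. More precisely, the proof of that theorem shows that every integer $N>k+1+1=k+2$ lifts, so $N=k+3$ works. Nothing further is needed for this direction.

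For the lower bound $n(e,2)>k+2$, I would construct two non-isomorphic fields of initial ramification $e$ whose valuation rings agree modulo $2^{k+2}$. The natural base is $\mathbb{Q}_2$, or an unramified extension of it, with two Eisenstein polynomials $F_1,F_2$ of degree $e$ that are congruent modulo $2^{k+2}$. If $F_1\equiv F_2 \pmod{2^{k+2}}$ coefficientwise, then
\[
\mathcal O_{K_i}/2^{k+2}\cong \mathcal O_0[X]/(F_i,2^{k+2}),
\]
and these two quotients coincide. The truncation isomorphism therefore comes for free, and all the work is in proving non-isomorphism.

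As a first test I would take $e=2$ and $k=1$, with $F_1=X^2-2$ and $F_2=X^2-2\cdot 5=X^2-10$. These agree modulo $8=2^{k+2}$, since $10\equiv 2 \pmod 8$. The fields $\mathbb{Q}_2(\sqrt2)$ and $\mathbb{Q}_2(\sqrt{10})$ are distinct because $5$ is not a square in $\mathbb{Q}_2$. A single isomorphism class of fields can arise from different embeddings of $K_0$, so one must rule out an abstract isomorphism, not just inequality of subfields. Here $K_0=\mathbb{Q}_2$ is canonical, so distinct quadratic subfields of $\overline{\mathbb{Q}}_2$ are non-isomorphic.

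For general $e=n2^k$ I would try
\[
K_1=K_0(\alpha),\ \alpha^e=2, \qquad K_2=K_0(\beta),\ \beta^e=2u,
\]
with $u\equiv 1 \pmod{2^{k+1}}$, so that $2u\equiv 2 \pmod{2^{k+2}}$, but $u$ not an $e$-th power in $K_0$. The trick used in the tame case then applies. If $K_1\cong K_2$ over $K_0$, then $K_2$ contains some $\gamma$ with $\gamma^e=2$, and $c=\beta/\gamma$ satisfies $c^e=u$ with $c\in K_2$.

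The main obstacle is showing that $u$ is not an $e$-th power in the larger field $K_2$, rather than only in $K_0$. I would handle it as follows:
\begin{itemize}
\item Choose $u=1+2^{k+1}\epsilon$ with $\epsilon$ a unit chosen so that $u\notin K_0^{\times 2^{k+1}}$ and the expansion of $\log$ forces obstruction. A cleaner choice is $u=1+2^{k+1}$ or $u=5^{2^{k-1}}$; for the latter, $5$ generates $1+4\mathbb{Z}_2$.
\item Compare $v(c-\zeta)$ along the binomial expansion: $c^{2^k}\equiv 1$ to precise order, which forces $c$ close to a root of unity.
\item Derive a contradiction from the valuation $v_2(u-1)=k+1$ exactly, which is too small for $u$ to be a $2^k$-th power in a totally ramified extension of ramification $e$.
\end{itemize}
If this computation becomes intractable, I would instead compare discriminants or the different. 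Compute $d_{K_i/K_0}$ and the ramification breaks of $K_1$ and $K_2$ via Newton polygons of $F_i(\alpha+X)$. These are isomorphism invariants over the canonical $K_0$, at least when $Kv$ is chosen perfect, so Lemma \ref{cohenlem}'s $K_0$ is unique up to isomorphism. Exhibiting different upper ramification breaks would then settle non-isomorphism.

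Finally, if one must ensure non-isomorphism without the identification over $K_0$, I would work over $K_0=\mathbb{Q}_2$ itself. There any isomorphism $K_1\cong K_2$ is automatically $\mathbb{Q}_2$-linear, since $\mathbb{Q}_2$ is the closure of $\mathbb{Q}$ in both fields.
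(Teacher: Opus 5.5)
\textbf{There is a genuine gap: the non-isomorphism step, which is the whole lower bound, is not proved.}

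Your upper bound is the paper's. Your $e=2$ example and your general construction also match the paper. With $u=5^{2^{k-1}}$, the field $\mathbb{Q}_2((2u)^{1/e})$ equals the paper's $L'=\mathbb{Q}_2((2\cdot 5^{e/2})^{1/e})$, because $5^{e/2}/u=5^{(n-1)2^{k-1}}$ is an $e$-th power in $\mathbb{Q}_2$ when $n$ is odd. The truncation isomorphism is handled correctly.

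The gap is the step you flag yourself: showing that $u$ is not an $e$-th power in $K_2$. Neither route you sketch closes it.
\begin{itemize}
\item The valuation heuristic is false for $k\geq 2$. Take $k=2$ and your own $u=25$, so $v_2(u-1)=3=k+1$. Then $25=(\sqrt{-5})^4$, and $\mathbb{Q}_2(\sqrt{-5})/\mathbb{Q}_2$ is totally ramified. Its units are principal units, hence uniquely $n$-divisible. So $25$ is an $e$-th power in every totally ramified degree-$e$ extension containing $\mathbb{Q}_2(\sqrt{-5})$. Hence no argument using only ``totally ramified of degree $e$'' and the size of $v_2(u-1)$ can work; roots of unity such as $i$ are invisible to that bookkeeping.
\item The fallback through differents or ramification polygons fails too. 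The roots of $X^e-2$ and $X^e-2u$ are $\zeta\alpha$ and $\zeta\beta$ with $\zeta^e=1$, so every $v(\pi-\pi')$ agrees. Both differents equal $ek+e-1$, and the Newton polygons of $F_1(\alpha+X)$ and $F_2(\beta+X)$ coincide. Already $\mathbb{Q}_2(\sqrt2)$ and $\mathbb{Q}_2(\sqrt{10})$ have the same different exponent $3$ and the same break.
\end{itemize}

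The missing idea is an obstruction that uses the specific field $K_2$. From $c^e=u=5^{2^{k-1}}$ you get $(c^{2n}/5)^{2^{k-1}}=1$. So $\xi:=c^{2n}/5$ is a root of unity of $2$-power order in $K_2$, and there are three cases:
\begin{itemize}
\item If $\xi$ has order at least $4$, then $i\in K_2$.
\item If $\xi=1$, then $\sqrt5\in K_2$. This is impossible, since $\mathbb{Q}_2(\sqrt5)$ is unramified and $K_2$ is totally ramified.
\item If $\xi=-1$, then $\sqrt{-5}\in K_2$.
\end{itemize}
The two ramified cases are ruled out by the different, as in the paper. First, $d(K_2/\mathbb{Q}_2)=v_{K_2}(e\beta^{e-1})=ek+e-1$ is the maximal possible value. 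Both $\mathbb{Q}_2(i)$ and $\mathbb{Q}_2(\sqrt{-5})$ are quadratic with $d(E/\mathbb{Q}_2)=2$. For such a subfield $E$, transitivity gives $d(K_2/E)=ek-1$. This exceeds the bound $e/2-1+e(k-1)$ for a totally ramified extension of degree $e/2$, a contradiction. Without this or an equivalent argument, your lower bound $n(e,2)>k+2$ is not established.
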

\begin{prop}\label{oddp}
    For $p\neq2$ and $p\mid e$,\[
    n(e,p) = k+2
    \]
\end{prop}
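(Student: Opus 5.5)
The upper bound comes from Theorem \ref{mainbound}, and the lower bound from an explicit pair of fields. Since $p$ is odd, $\frac{1}{p-1}\leq\frac12<1$, so $\left\lfloor k+1+\frac{1}{p-1}\right\rfloor+1=k+2$. The proof of Theorem \ref{mainbound} therefore already gives $n(e,p)\leq k+2$. What remains is to exhibit, for each odd $p$ and each $e=np^k$ with $k\geq1$, two non-isomorphic complete discretely valued fields of initial ramification $e$ with $\mathcal O/(p)^{k+1}$ isomorphic.

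\emph{Construction.} Take $K_0$ unramified over $\mathbb Q_p$, say $\mathbb Q_p$ itself or a small unramified extension with residue field $\kappa$, chosen to help the final step. Set
\[
K_1=K_0(\alpha),\ \alpha^e=p,\qquad K_2=K_0(\beta),\ \beta^e=p(1+p^kc),
\]
with $c\in\mathcal O_{K_0}^\times$ still to be chosen. Both polynomials are Eisenstein of degree $e$, so both fields are totally ramified of degree $e$. As in the tame lemma,
\[
\mathcal O_{K_i}/p^{k+1}\cong \mathcal O_{K_0}[X]/(F_i,p^{k+1}).
\]
Here $F_1=X^e-p$ and $F_2=X^e-p-p^{k+1}c$, so $F_1\equiv F_2 \pmod{p^{k+1}}$. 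Hence the $(k+1)$-st truncations are isomorphic.

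\emph{Non-isomorphism.} Suppose $K_1\cong K_2$. Then $K_1$ contains some $\gamma$ with $\gamma^e=p(1+p^kc)$, so $u:=1+p^kc=(\gamma/\alpha)^e$ is an $e$-th power in $K_1$. Since $n$ is prime to $p$ and $u$ is a principal unit, $n$-th roots pose no obstruction. The question is therefore whether $u$ is a $p^k$-th power of a principal unit of $K_1$.

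Work with the uniformiser $\alpha$, for which $v_\alpha(p)=e$. Write $y=1+w$ with $w\in\mathfrak m_{K_1}$ and expand $y^{p^k}$ one $p$-th power at a time. At each step the binomial expansion has two competing leading contributions: $w\mapsto w^p$, which multiplies the valuation by $p$, and $w\mapsto pw$, which adds $e$ to the valuation. One tracks the valuation of $y^{p^k}-1$. To reach exactly level $ke=v_\alpha(p^k)$, the leading residue of $y^{p^k}-1$ must be given by an additive polynomial in the leading residue $\bar z$ of $w$. This is a combination of $\bar z^{p^k}$ and terms such as $\bar z^{p^{j}}$ with coefficients coming from $p/\alpha^e=1$. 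The resulting map $\kappa\to\kappa$ is a nonzero $p$-polynomial (for instance of the shape $\bar z^p-\bar z$ at a critical step), so it is not surjective on $\kappa$ when $\kappa=\mathbb F_p$, or more generally for a suitable $\kappa$. Choosing $\bar c$ outside its image gives the contradiction.

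I expect the main obstacle to be this last step: the bookkeeping of valuations in the iterated $p$-th powers, including the carries when the two contributions tie. There is an added subtlety when $e$-th roots of unity would allow modifying $\gamma$, though these only change $u$ by factors of $\mu_e\cap K_1$, which can be absorbed into the argument.

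If the direct computation becomes unwieldy, I would instead compare the ramification filtration (upper breaks) of $K_1/K_0$ and $K_2/K_0$, or of their Galois closures, and show that the perturbation at level $p^{k+1}$ changes an invariant. Here the oddness of $p$ should be exactly what makes level $k+1$ reachable, as opposed to the extra level needed for $p=2$.
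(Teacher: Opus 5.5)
Your upper bound is correct, and so is your construction: the truncations agree modulo $p^{k+1}$, and the paper's pair is your case $K_0=\mathbb{Q}_p$, $1+p^kc=(1+p)^{e/p}$. The gap is the non-isomorphism step. The leading-order obstruction you describe does not exist for $\kappa=\mathbb{F}_p$.

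For $y=1+w$ with $v_\alpha(w)=i$ one has $v_\alpha(y^p-1)=\min(pi,\,i+e)$. The only possible exception is $i=e/(p-1)$, where the leading residue is $\bar z^p+\bar z$ (plus sign, since $p/\alpha^e=1$). This has no nonzero root in $\mathbb{F}_p$ for odd $p$, so no cancellation occurs. Iterating, level $ke$ is reached after $k$ steps only from $i=e/p$, along the path $e/p\mapsto e\mapsto 2e\mapsto\cdots\mapsto ke$. Since $e/p<e/(p-1)<e$ for odd $p$, the critical value is skipped. So the graded map at level $ke$ is just $\bar z\mapsto\bar z^p$.

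That map is bijective on $\mathbb{F}_p$ and on every perfect $\kappa$. Hence every $\bar c$ is attained at leading order, and no choice of $\bar c$ produces a contradiction. Even a genuine critical step would give $\bar z\mapsto 2\bar z$ on $\mathbb{F}_p$, not an Artin--Schreier map.

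The obstruction for $K_0=\mathbb{Q}_p$ is real, but it lives in the carries at higher levels. For example, for $e=p$, any $y=1+\alpha z$ with $y^p\equiv u \pmod{\alpha^{p+1}}$ satisfies $y^p-u\equiv\alpha^{p+1}z\not\equiv 0\pmod{\alpha^{p+2}}$; this term comes from $p\alpha z$. Controlling this for general $k$ is exactly the bookkeeping you left undone. Your ramification-break fallback is not yet an argument either.

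The paper avoids this bookkeeping with a global argument. With $u=(1+p)^{e/p}$, any $e$-th root of $u$ is an $e$-th root of unity times $(1+p)^{1/p}$. After excluding $p$-power roots of unity from $L'$, using its maximal tame subextension and residue field $\mathbb{F}_p$, one gets $F=\mathbb{Q}_p((1+p)^{1/p})\subseteq L'$. Then $d(F/\mathbb{Q}_p)=p$ and $d(L'/\mathbb{Q}_p)=ek+e-1$ force $d(L'/F)=ek-1$, which exceeds the bound $e/p-1+e(k-1)$.

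Alternatively, your leading-order idea does work if you take $\kappa$ \emph{imperfect}, which the definition of $n(e,p)$ allows. Take $K_0$ a Cohen field of $\mathbb{F}_p(t)$ and $\bar c=t$. The computation above then shows that $1+p^kc'$ is a $p^k$-th power of a principal unit only if $\bar c'\in\kappa^p$. Here the critical case cannot cancel, because $-1$ is not a $(p-1)$-th power in $\mathbb{F}_p(t)$. You would still need two further steps:
\begin{enumerate}
\item Reduce $e$-th powers to $p^k$-th powers of principal units by Hensel-lifting the $n$-th root of unity $\overline{\gamma/\alpha}$.
\item Note that an isomorphism $K_2\cong K_1$ need not preserve $K_0$. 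So $c$ is replaced by some $c'$ with $\bar c'=\bar\sigma(t)$, which is still not in $\kappa^p$.
\end{enumerate}
As written, with $\kappa=\mathbb{F}_p$ or a finite residue field, the proposal does not establish $K_1\ncong K_2$.
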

To prove these two propositions, the following lemma will help:

\begin{lemma}\label{pathology}
Let $e=np^k$, where $(n,p)=1$ and $p\mid e$. For $p$ odd,
\[
    \mathbb{Q}_p(p^{1/e})
    \ncong
    \mathbb{Q}_p((p(1+p)^{e/p})^{1/e}),
\]
and for $p=2$,
\[
    \mathbb{Q}_2(2^{1/e})
    \ncong
    \mathbb{Q}_2((2\cdot5^{e/2})^{1/e}).
\]
\end{lemma}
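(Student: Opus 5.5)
The plan is to argue by contradiction. Suppose the two fields are isomorphic. I will extract from the isomorphism a $p$-th root, inside $L:=\mathbb{Q}_p(p^{1/e})$, of an element of the form $\zeta u$. Here $u=1+p$ (respectively $u=5$ when $p=2$) and $\zeta$ is a root of unity. Then I will show that no such root exists.

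\textbf{Step 1: the isomorphism is over $\mathbb{Q}_p$.} Any field isomorphism $\sigma$ between the two fields fixes $\mathbb{Q}$. Both fields are complete with respect to their unique discrete valuations, so $\sigma$ preserves valuations and is therefore continuous. Hence $\sigma$ fixes $\mathbb{Q}_p$. So I may assume $L=\mathbb{Q}_p(\gamma)$ with $\gamma^e=p$, and that $L$ also contains $\beta$ with $\beta^e=pu^{e/p}$.

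\textbf{Step 2: produce the $p$-th root.} Put $c=\beta/\gamma\in L$. Then $c^e=u^{e/p}$, so $(c^p)^{e/p}=u^{e/p}$, and hence $c^p=\zeta u$ with $\zeta^{e/p}=1$ and $\zeta\in L$. Since $L/\mathbb{Q}_p$ is totally ramified, $L$ has residue field $\mathbb{F}_p$. Therefore the prime-to-$p$ part of $\zeta$ is a Teichm\"uller root of unity of $\mathbb{Q}_p$. After multiplying $c$ by a Teichm\"uller element of $\mathbb{Q}_p$, I may also take $c\equiv 1$ modulo $\mathfrak m_L$. The remaining $p$-power part of $\zeta$ lies in $\mu_{p^\infty}(L)$. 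I would first try to show that this part is trivial, or at least controlled, by degree considerations: $[\mathbb{Q}_p(\zeta_{p}):\mathbb{Q}_p]=p-1$, and a totally ramified extension containing $\zeta_p$ must have its uniformiser valuations compatible with $(-p)^{1/(p-1)}$. If this fails in general, I would simply carry the factor $\zeta$ through Step 3. The problem then becomes showing that $\zeta u$ is not a $p$-th power of a principal unit $1+x$ of $L$, where $v_p(x)\in\frac1e\mathbb{Z}_{>0}$.

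\textbf{Step 3: the main obstacle.} I must rule out $(1+x)^p=\zeta(1+p)$. I would expand $(1+x)^p-1=x^p+px(1+\cdots)$ and compare valuations. If $v_p(x)>1/(p-1)$, the left-hand side has valuation $v_p(x)+1>1$, which contradicts $v_p(p)=1$, at least when $\zeta=1$. So $v_p(x)\le 1/(p-1)$, and matching the leading term forces $v_p(x)=1/p$ and $x^p\equiv p$ modulo higher order. Writing $x=\gamma^{e/p}y$ with $y$ a unit, the next-order terms then impose a congruence on the residue $\bar y\in\mathbb{F}_p$ at the critical level $v_p=1+1/p$. There, the term $px$ and the correction to $x^p$ have the same valuation. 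The resulting condition should take an Artin--Schreier form $\bar y^p-\bar y=\bar c$ with $\bar c\neq 0$ in $\mathbb{F}_p$, which has no solution because $\bar y^p=\bar y$ in $\mathbb{F}_p$. This is exactly where the specific choice of $u$ matters.

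For $p=2$, the choice $u=5$ rather than $3$ is needed because $-1\cdot 3$ and $-1$ interfere: $-1$ is a root of unity available in $L$. One works with $1+4$ at the deeper level $v_2=2$, where the same residue-field obstruction (now for $\bar y^2+\bar y$ in $\mathbb{F}_2$) applies. I expect this delicate bookkeeping of the critical-level congruence, including handling a possible nontrivial $p$-power root of unity $\zeta$, to be the hardest part. If the direct expansion gets unwieldy, a fallback is to compare discriminants, or to compare the values $\kappa$ from Proposition~\ref{kras_bound} of the two Eisenstein polynomials $X^e-p$ and $X^e-pu^{e/p}$ over $\mathbb{Q}_p$.
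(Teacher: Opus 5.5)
\textbf{There is a genuine gap in Step 3, and the obstruction you predict there does not exist.} Your Steps 1 and 2 are fine and amount to the paper's own reduction. Step 1 is not even needed, since any field isomorphism fixes $p$ and $pu^{e/p}\in\mathbb{Z}$.

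\textbf{Why the Artin--Schreier obstruction is absent.} Once $v_p(x)=1/p$ and $x=\gamma^{e/p}y$, the equation $(1+x)^p=1+p$ reads $p(y^p-1)+p\gamma^{e/p}y+\binom{p}{2}\gamma^{2e/p}y^2+\cdots=0$.
\begin{itemize}
\item The terms $x^p=py^p$ and $px$ have valuations $1$ and $1+1/p$, so their residues never combine into $\bar y^p-\bar y$. The only condition on $\bar y$ is $\bar y^p=1$, i.e.\ $\bar y=1$.
\item At level $1+1/p$ the constraint falls on $z=y-1$ and says $z^p\equiv-\gamma^{e/p}$ to leading order. This is a pure $p$-th power equation $\bar w^p=-1$, always solvable in $\mathbb{F}_p$. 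It only forces $v_p(z)=1/p^2$.
\item An Artin--Schreier equation appears only when $v_p(x)=1/(p-1)$, which you have already excluded.
\end{itemize}
So for $k\ge2$ your argument produces no contradiction at that level. Your fallbacks cannot rescue it either. The roots of $X^e-p$ and $X^e-pu^{e/p}$ are $\gamma\zeta_e^j$ and $\beta\zeta_e^j$, so the two polynomials have identical root-difference valuations, hence identical $\kappa$, $S$ and discriminant.

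\textbf{What is missing is a genuine obstruction.} The paper uses the different. Since $d(L'/\mathbb{Q}_p)=ek+e-1$ is maximal, transitivity together with $d(L'/M)\le e/p-1+e(k-1)$ forces every degree-$p$ subfield $M\subseteq L'$ to satisfy $d(M/\mathbb{Q}_p)\ge 2p-1$. But $\mathbb{Q}_p((1+p)^{1/p})$ has different $p$, $\mathbb{Q}_2(i)$ and $\mathbb{Q}_2(\sqrt{-5})$ have different $2$, and $\mathbb{Q}_2(\sqrt5)$ is unramified.

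If you prefer your direct route, the obstruction lies in the value group, not the residue field. For odd $p$, set $i_j=e(p^{-1}+\cdots+p^{-(j+1)})$, so that $pi_j=e+i_{j-1}$. Since $\gamma^e=p$ exactly, one checks that $\prod_{j=0}^{m}(1+(-1)^j\gamma^{i_j})^p\equiv 1+p+(-1)^m\gamma^{e+i_m}$ modulo $\mathfrak m_L^{e+2e/p}$ for $m\le k-1$. Hence $c/\prod_{j=0}^{k-1}(1+(-1)^j\gamma^{i_j})$ would be a principal unit whose $p$-th power has exact level $e+i_{k-1}=n(1+p+\cdots+p^k)$. This level is prime to $p$ and below $pe/(p-1)$, which is impossible for the $p$-th power of a principal unit.

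\textbf{The root-of-unity factor is also left unresolved.}
\begin{itemize}
\item For odd $p$ and $k\ge2$ you must actually prove $\zeta_p\notin L$. The paper does this via the tame subfield and $\mathbb{Q}_p(\zeta_p)=\mathbb{Q}_p(\sqrt[p-1]{-p})$.
\item For $p=2$, $-1\in\mathbb{Q}_2$, so when $k\ge2$ you must treat $c^2=-5$, and for $k\ge3$ also exclude $i\in L$.
\item Your $\bar y^2+\bar y$ argument only disposes of $c^2=5$. Since $-5\equiv3\pmod 8$, a square root of $-5$ has $v_2(c-1)=1/2$, not the Artin--Schreier level $1$.
\end{itemize}
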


\begin{proof}[Proof of the odd $p$ case]
Let $\alpha:=p^{1/e}$, $\beta:=(p(1+p)^{e/p})^{1/e}$, $L:=\mathbb{Q}_p(\alpha)$ and $L':=\mathbb{Q}_p(\beta)$. Suppose $L\cong L'$. Then there is an $x\in L'$ satisfying $x^e=p$. Set $y=x/\beta$ and $t=y^n$. Since $e=np^k$, we have $(t^p(1+p)^n)^{p^{k-1}}=1$.

We first show that $L'$ contains no nontrivial root of unity of $p$-power order. Suppose otherwise. Then $\zeta_p\in L'$. Let $\delta:=\beta^{p^k}/(1+p)^{p^{k-1}}$. Then $\delta^n=p$, so $T:=\mathbb{Q}_p(\delta)$ is the maximal tamely ramified subextension of $L'/\mathbb{Q}_p$. Since $\mathbb{Q}_p(\zeta_p)/\mathbb{Q}_p$ is tamely ramified of degree $p-1$, we have $\mathbb{Q}_p(\zeta_p)\subseteq T$. Hence $p-1\mid n$.

Write $n=h(p-1)$ for some positive integer $h$, and set $a:=\delta^h$. Then $a^{p-1}=p$. Now $E:=\mathbb{Q}_p(\zeta_p)=\mathbb{Q}_p(\sqrt[p-1]{-p})$ by \cite[Lemma 14.6]{Washington}, and $E\subseteq T$. Let $b:=\sqrt[p-1]{-p}$ and $z:=b/a$. Then $z^{p-1}=-1$, so $z$ is a root of unity whose order divides $2(p-1)$ but does not divide $p-1$. Since $p$ is odd, the order of $z$ is prime to $p$.

Since $T/\mathbb{Q}_p$ is totally ramified, the residue field of $T$ is $\mathbb{F}_p$. Reduction modulo the maximal ideal gives a group homomorphism $\mathcal O_T^\times\to\mathbb{F}_p^\times$. We claim that this map is injective on roots of unity of order prime to $p$. Indeed, let $\zeta$ be a root of unity of order $m$, where $(m,p)=1$, and suppose $\zeta\equiv1\pmod{\mathfrak m_T}$. Then $\zeta$ is a root of $f(X)=X^m-1$, and modulo $\mathfrak m_T$ the element $1$ is a simple root of $f$, since $f'(1)=m$ is a unit. By Hensel's lemma, $1$ has a unique lift to a root of $f$ in $\mathcal O_T$. Since $1$ itself is such a lift, we must have $\zeta=1$.

Thus $\langle z\rangle$ embeds into $\mathbb{F}_p^\times$. By Lagrange's theorem, the order of $z$ must divide $|\mathbb{F}_p^\times|=p-1$, contradicting the fact that its order does not divide $p-1$. Therefore $\zeta_p\notin L'$, and hence $L'$ contains no nontrivial root of unity of $p$-power order.

It follows from $(t^p(1+p)^n)^{p^{k-1}}=1$ that $t^p=(1+p)^{-n}$. Choose $a,b\in\mathbb{Z}$ such that $ap+bn=1$. Then $r:=t^{-b}(1+p)^a$ satisfies $r^p=1+p$, and hence $F:=\mathbb{Q}_p((1+p)^{1/p})\subseteq L'$.

Writing $r=1+s$, the element $s$ is a root of the Eisenstein polynomial $f(X)=(1+X)^p-(1+p)$. Hence $[F:\mathbb{Q}_p]=p$ and $s$ is a uniformiser of $F$. Since $F=\mathbb{Q}_p(s)$, the different is generated by $f'(s)$ by \cite[Chapter III, \S6, Corollary 2 to Proposition 11]{serre1979local}. Now $f'(X)=p(1+X)^{p-1}$, so $f'(s)=pr^{p-1}$. Since $r$ is a unit in $\mathcal O_F$, we obtain $v_F(f'(s))=v_F(p)=p$, as $F/\mathbb{Q}_p$ is totally ramified of degree $p$. Therefore $d(F/\mathbb{Q}_p)=p$.

The minimal polynomial of $\beta$ is $X^e-p(1+p)^{e/p}$, so $d(L'/\mathbb{Q}_p)=v_{L'}(e\beta^{e-1})=ek+e-1$ by \cite[Chapter III, \S6, Corollary 2 to Proposition 11]{serre1979local}. By transitivity of the different \cite[Chapter III, \S4, Proposition 8]{serre1979local}, we therefore obtain $d(L'/F)=ek-1$.

On the other hand, $[L':F]=e/p$, so the standard bound for the different \cite[Chapter III, \S6, Remark 1 following Proposition 13]{serre1979local} gives $d(L'/F)\leq e/p-1+e(k-1)<ek-1$, a contradiction. Therefore $L\ncong L'$.
\end{proof}

\begin{proof}[Proof of the $p=2$ case]
Let $L=\mathbb{Q}_2(2^{1/e})$ and $L'=\mathbb{Q}_2((2\cdot5^{e/2})^{1/e})$, and let $\alpha,\beta$ be elements in $L,L'$ respectively such that $\alpha^e=2$ and $\beta^e=2\cdot5^{e/2}$. Suppose that $L\cong L'$. Then there is some $x\in L'$ satisfying $x^e=2$. Set $y=x/\beta$ and $t=y^n$. Then $(t^25^n)^{2^{k-1}}=1$.

Let $\xi:=t^25^n$. If $\xi$ has order at least $4$, then, since its order is a power of $2$, some power of $\xi$ is a primitive fourth root of unity. Hence $i\in L'$. Since $d(\mathbb{Q}_2(i)/\mathbb{Q}_2)=2$, transitivity of the different gives $d(L'/\mathbb{Q}_2(i))=ek-1$. The standard bound for the different \cite[Chapter III, \S6, Remark 1 following Proposition 13]{serre1979local} then gives $d(L'/\mathbb{Q}_2(i))\leq e/2-1+e(k-1)<ek-1$, a contradiction.

Hence $\xi=\pm1$. Suppose first that $\xi=1$. Since $n$ is odd, $(n+1)/2\in\mathbb{Z}$, and $(t5^{(n+1)/2})^2=5$. Thus $\sqrt{5}\in L'$. However, $\mathbb{Q}_2(\sqrt{5})/\mathbb{Q}_2$ is unramified, whereas $L'/\mathbb{Q}_2$ is totally ramified, a contradiction.

Suppose instead that $\xi=-1$. Then $(t5^{(n+1)/2})^2=-5$, so $\mathbb{Q}_2(\sqrt{-5})\subseteq L'$. Writing $s:=\sqrt{-5}-1$, the element $s$ is a root of the Eisenstein polynomial $f(X)=X^2+2X+6$, and hence is a uniformiser of $\mathbb{Q}_2(\sqrt{-5})$. By \cite[Chapter III, \S6, Corollary 2 to Proposition 11]{serre1979local}, its different is generated by $f'(s)=2(s+1)=2\sqrt{-5}$. Since $\sqrt{-5}$ is a unit, we obtain $d(\mathbb{Q}_2(\sqrt{-5})/\mathbb{Q}_2)=2$. Hence, by transitivity of the different, $d(L'/\mathbb{Q}_2(\sqrt{-5}))=ek-1$. On the other hand, the different bound gives $d(L'/\mathbb{Q}_2(\sqrt{-5}))\leq e/2-1+e(k-1)<ek-1$, again a contradiction.

Thus every possibility leads to a contradiction, and therefore $L\ncong L'$.
\end{proof}

\begin{proof}[Proof of Proposition \ref{p=2}]
As a consequence of Theorem \ref{mainbound}, when $2\mid e$ we have $n(e,2)\leq k+3$. It therefore suffices to show that $n(e,2)>k+2$.

Let $L$ and $L'$ be the two fields from Lemma \ref{pathology}. Since both extensions are defined by Eisenstein polynomials, \cite[Chapter I, \S6, Proposition 17 and the following Corollary]{serre1979local} gives $\mathcal O_L\cong\mathbb{Z}_2[X]/(X^e-2)$ and $\mathcal O_{L'}\cong\mathbb{Z}_2[X]/(X^e-2\cdot5^{e/2})$.

Since $e/2=n2^{k-1}$ and $n$ is odd, we have $v_2(e/2)=k-1$. For every positive integer $m$, the lifting-the-exponent lemma \cite[\S1.9, Theorem 1.37]{pongsriiam} gives $v_2(5^m-1)=v_2(5-1)+v_2(m)=2+v_2(m)$. Taking $m=e/2$, we obtain $v_2(5^{e/2}-1)=k+1$, and hence $v_2(2\cdot5^{e/2}-2)=k+2$. Therefore $2\cdot5^{e/2}\equiv2\pmod{2^{k+2}}$.

It follows that $X^e-2\equiv X^e-2\cdot5^{e/2}\pmod{2^{k+2}}$, and hence
\[
\mathcal O_L/(2^{k+2})
\cong
\frac{\mathbb Z_2[X]}{(X^e-2,\,2^{k+2})}
\cong
\frac{\mathbb Z_2[X]}{(X^e-2\cdot5^{e/2},\,2^{k+2})}
\cong
\mathcal O_{L'}/(2^{k+2}).
\]
By Lemma \ref{pathology}, $L\ncong L'$. Thus $n(e,2)>k+2$, and together with the upper bound $n(e,2)\leq k+3$, this gives $n(e,2)=k+3$.
\end{proof}

\begin{proof}[Proof of Proposition \ref{oddp}]
By Theorem \ref{mainbound}, for odd $p$ we have $n(e,p)\leq k+2$. It therefore suffices to show that $n(e,p)>k+1$.

Let $L$ and $L'$ be the two fields from Lemma \ref{pathology}. As above, $\mathcal O_L\cong\mathbb{Z}_p[X]/(X^e-p)$ and $\mathcal O_{L'}\cong\mathbb{Z}_p[X]/(X^e-p(1+p)^{e/p})$.

By the lifting-the-exponent lemma, $v_p((1+p)^{e/p}-1)=1+v_p(e/p)=k$. Hence $v_p(p(1+p)^{e/p}-p)=k+1$, so $p(1+p)^{e/p}\equiv p\pmod{p^{k+1}}$.

Therefore
\[
\mathcal O_L/(p^{k+1})
\cong
\frac{\mathbb Z_p[X]}{(X^e-p,\,p^{k+1})}
\cong
\frac{\mathbb Z_p[X]}{(X^e-p(1+p)^{e/p},\,p^{k+1})}
\cong
\mathcal O_{L'}/(p^{k+1}).
\]
Since $L\ncong L'$ by Lemma \ref{pathology}, we have $n(e,p)>k+1$. Combining this with $n(e,p)\leq k+2$ gives $n(e,p)=k+2$.
\end{proof}

\section*{Acknowledgments}
The author would like to thank Philip Dittmann for suggesting this problem and for regular helpful discussions and feedback. The author has discussed and exchanged ideas with an LLM and in parts the LLM has helped find references for shorter proofs in the literature, errors in proofs as well as finding the final two examples for optimality. The author takes full responsibility for all parts of the work.
\printbibliography
\end{document}